\documentclass{amsart}
\usepackage[utf8]{inputenc}
\usepackage{todonotes}

\usepackage{geometry}
\usepackage{amssymb,latexsym,enumitem,bbm,xcolor,bm}
\usepackage[noadjust]{cite}
\usepackage{setspace}
\usepackage{color}
\usepackage{hyperref}

\newcommand{\R}{\mathbb{R}}
\newcommand{\N}{\mathbb{N}}
\newcommand{\Z}{\mathbb{Z}}

\newcommand{\vecu}{\mathbf{u}}
\newcommand{\vecv}{\mathbf{v}}

\newcommand{\Sc}{\mathcal{S}}
\newcommand{\Scsum}{\boldsymbol{\mathcal{S}}}

\newcommand{\Exp}{\mathbb{E}}

\newcommand{\inpr}[3][]{\left\langle#2 \,,\, #3\right\rangle_{#1}}

\newcommand{\oldinpr}[3][]{{\lll\!#2 \,,\, #3\!\ggg_{#1}}}

\newcommand{\oldnorm}[2][]{{|\!\|#2\|\!|_{#1}}}

\numberwithin{equation}{section}

\newtheorem{theorem}{Theorem}[section]
\newtheorem{corollary}[theorem]{Corollary}
\newtheorem{lemma}[theorem]{Lemma}
\newtheorem{proposition}[theorem]{Proposition}
\newtheorem{remark}[theorem]{Remark}

\newtheorem{definition}[theorem]{Definition}

\allowdisplaybreaks

\title{The Burgers' Equation in the Hermite-Sobolev Spaces}
\author{Suprio Bhar}
\address{Suprio Bhar, Department of Statistics and Data Science, Indian Institute of Technology Kanpur, Kalyanpur, Kanpur - 208016, India.}
\email{suprio@iitk.ac.in}

\author{Rajeev Bhaskaran}
\address{Rajeev Bhaskaran, Harish-Chandra Research Institute, Chhatnag Rd, Jhusi, Prayagraj, Uttar Pradesh - 211019, India.}
\email{brajeev58j@gmail.com}

\author{Barun Sarkar}
\address{Barun Sarkar, Department of Mathematics, Indian Institute of Technology Madras, Chennai - 600036, India.}
\email{barun@iitm.ac.in}

\date{}

\begin{document}
\begin{abstract}
   In this paper, we show existence and uniqueness of solutions to the viscous Burgers' equation in $\mathbb{R}^d$, when the initial condition $u_0$ is in Hermite-Sobolev space of index $p$, for suitable non-negative integers $p$. Our solutions are local in time. We also have a regularity result, viz. if $u_0$ belongs to Schwartz space, then so does the solution.
\end{abstract}

\keywords{Non-Linear PDE, Stochastic PDE}
\subjclass[2010]{Primary: 35G10, 35K55, 60H15, 60H30 Secondary: 46A11, 46F10}

\maketitle

\section{Introduction}
The Burgers' equation (\cite{MR1146}) is an important partial differential equation that arises in fluid mechanics and related areas. In this paper, which is the third in a series of three papers (the other two being \cite{alt-mono, sp-algebra}), we prove the existence and uniqueness of solutions to the system of equations arising from Burgers' equation in dimensions $d \geq 1$ in the following form:
\[ \partial_t \vecu(t, x) = \frac{1}{2}\bigtriangleup \vecu(t, x) + \vecu(t, x). \nabla \vecu(t, x);~~\vecu(0,x) = \vecu_0(x),~~ t \geq 0, x \in \mathbb R^d.\] Here $\vecu(t,x) = (u_1(t,x),\cdots,u_d(t,x))$, where $u_i(t,x)$ are the components of the vector valued function $\vecu(t,x)$ and $\vecu_0(x) = (u_0^1(x),\cdots,u_0^d(x))$ is the given initial vector of functions. We have taken the convective term on the right hand side, which is easily seen to be equivalent to the standard form. We have taken the viscosity parameter $\nu = \frac{1}{2}$, but any $\nu > 0$ also works. We prove our results when the initial functions $u_0^i$ have some regularity that we describe below. 
                           
There is a vast literature on Burgers' equation based on the classical Sobolev spaces, which is available in the scholarly domain; see \cite{MR4198716} for some recent results on the Burgers' equation without the viscous term, see \cite{MR2318582} for a review of problems related to the Burgers' equation, see \cite[Subsection 4.4.1]{MR2597943} for the representation of solution using the well-known Cole-Hopf transformation (\cite{MR47234, MR42889}). However, our approach in this series of papers  uses a different family of spaces than the usual Sobolev spaces used in PDE theory viz. we prove our results in the setting of Hermite-Sobolev spaces $\Sc_p$, for non-negative integers $p$. These spaces were used by K. It\^o, G. Kallianpur and others as a framework to study stochastic partial differential equations (see \cite{MR1465436, MR771478}). In this paper we use these spaces to prove the existence and uniqueness of solutions for Burgers' equation. We use an essentially deterministic approach to prove that for given initial condition $\vecu_0$ with $u_{0}^i \in \mathcal S_p, p > 1+ \frac{d+1}{2}$, there exists unique solutions $\vecu(t)$ with $u_i(t) \in \mathcal S_p$ upto an explosion time $\tau > 0$. We recall here that for $p > \frac{d}{4}+1$ the $\mathcal S_p$ spaces consist of continuously differentiable functions \cite[Theorem 4.1]{MR2373102}.

An essential feature of our approach, different from the techniques in \cite{MR1465436, MR771478} is the use of norms different from, but equivalent to the original Hermite-Sobolev norms for $p$ non-negative integers (see Section 2 below). These norms are particularly well suited to handle the `product type' non-linearities that occur in equations such as the Burgers' equation. In \cite{alt-mono} we established the key `Monotonicity inequalities' \cite{MR570795} in these alternate norms, for differential operators in the context of Stochastic PDEs, which are easily adapted to the PDE context in the current paper. These are essential to show existence and uniqueness for certain classes of linear Stochastic PDEs connected with finite dimensional diffusions as shown in \cite{MR2590157}. Also essential for handling the non-linearities arising in Burgers' equation is the boundedness of the multiplication by functions, shown in \cite{sp-algebra}.

Our approach is to first prove existence and uniqueness for $\vecu_0$ with $u_0^i \in \Sc$, the Schwarz space of rapidly decreasing smooth functions on $\R^d.$ We do this in Section 3. Since the `Monotonicity inequality' implies the continuity of solutions with respect to the initial conditions and since $\mathcal S$ is dense in $\mathcal S_p$ we get the required extension of results in Section 3 to the case when $u_0 \in \mathcal S_p, p > 1 + \frac{d+1}{2}$. To prove our existence results in Section 3 we use solutions of the linearised equation (see \eqref{nwclclnr1}) to generate a sequence of linear approximations by iteration (see \eqref{iteration-linear}). To show convergence of this sequence we use an analog of the `two-step Monotonicity inequality' established earlier (\cite{brajeev-arxiv}) for (scalar-type) non-linear operators in the context of SPDEs. To obtain bounds on the approximating sequence, we use a stopping technique used there, suitably modified to handle the norms in the current situation. Finally uniqueness is proved again using a `Monotonicity inequality' for the non-linear operator in Burgers' equation.

\section{Preliminaries: Hilbertian Topology on Schwartz space}

Let $\mathbb{Z}^d_+:=\{\alpha=(\alpha_1,\cdots, \alpha_d): \; \alpha_i \text{ non-negative integers}\}$. If $\alpha\in\mathbb{Z}^d_+$, we define $|\alpha|:=\alpha_1+\cdots+\alpha_d$. The topology on $\Sc(\R^d)$ is given by a family of seminorms $p_n, n = 0, 1, \cdots$ (see \cite{MR771478, MR1681462}) where
\[p_n(f) := \sup_{x \in \R^d} \left[(1 + |x|)^n \max_{\alpha : |\alpha| \leq n} \left|\frac{\partial^{|\alpha|}}{\partial_{x_1}^{\alpha_1}\cdots \partial_{x_d}^{\alpha_d}}f(x) \right|\right], \forall f \in \Sc(\R^d)\]
where $|x|$ denotes the usual Euclidean norm for $x \in \R^d$. We shall adopt the short-hand notation $\partial^\alpha$ and $\partial_j$ for $\frac{\partial^{|\alpha|}}{\partial_{x_1}^{\alpha_1}\cdots \partial_{x_d}^{\alpha_d}}$ and $\frac{\partial}{\partial_{x_j}}$, respectively.

For $p \in \R$, consider the increasing family of norms $\oldnorm[p]{\cdot}$, defined by the inner products
\begin{equation}
\oldinpr[p]{f}{g}
:=\sum_{n\in\mathbb{Z}^d_+}(2|n|+d)^{2p}\langle f,h_n\rangle_0 \langle g,h_n\rangle_0,\ \ \ f,g\in\Sc(\R^d).
\end{equation}
In the above equation, $\{h_n: n\in\mathbb{Z}^d_+\}$ is an orthonormal basis for $\mathcal{L}^2(\R^d)$ given by the Hermite functions and $\langle\cdot,\cdot\rangle_0$ is the inner product in $\mathcal{L}^2(\R^d)$ with respect to the Lebesgue measure. The Hermite-Sobolev spaces $\Sc_p(\R^d), p \in \R$ are defined as the completion of $\Sc(\R^d)$ in
$\oldnorm[p]{\cdot}$. Note that the dual space $\Sc_p^\prime(\R^d)$ is isometrically isomorphic with $\Sc_{-p}(\R^d)$ for $p\geq 0$. For $\phi \in \Sc(\R^d)$ and $\psi \in \Sc^\prime(\R^d)$, we write the duality action by $\inpr{\psi}{\phi}$. The same notation $\inpr{\psi}{\phi}$ shall also stand for the duality between $\phi \in \Sc_p(\R^d)$ and $\psi \in \Sc_{-p}(\R^d)$. We also have $\Sc(\R^d) = \bigcap_{p}(\Sc_p(\R^d),\oldnorm[p]{\cdot}), \Sc^\prime(\R^d) = \bigcup_{p>0}(\Sc_{-p}(\R^d),\oldnorm[-p]{\cdot})$ and $\Sc_0(\R^d) = \mathcal{L}^2(\R^d)$. The following basic relations hold for the $\Sc_p(\R^d)$ spaces: for $0<q<p$, \[\Sc(\R^d) \subset \Sc_p(\R^d) \subset \Sc_q(\R^d) \subset \mathcal L^2(\R^d) = \Sc_0(\R^d) \subset \Sc_{-q}(\R^d)\subset \Sc_{-p}(\R^d)\subset \Sc^\prime(\R^d).\]
The topology on $\Sc(\R^d)$ given by the norms $\oldnorm[p]{\cdot}, p \in \Z_+$ is the same as the topology on $\Sc(\R^d)$ generated by the seminorms $p_n, n = 0, 1, \cdots$ (see \cite[Proposition 1.1]{MR1837298}).

We now recall an alternate way to describe the same topology on $\Sc(\R^d)$. For $p \in \Z_+$, define for $\phi, \psi \in \Sc(\R^d)$,
\[\inpr[p]{\phi}{\psi}:= \sum_{|\alpha| + |\beta| \leq 2p}\int_{\R} x^\alpha\partial^\beta \phi(x)\, x^\alpha\partial^\beta \psi(x)\, dx,\]
where $\alpha,\beta\in\Z^d_+$ in the above sum. We denote the corresponding norms by $\|\cdot\|_p$. It is known that the topology on $\Sc(\R^d)$ generated by $\|\cdot\|_p, p \in \Z_+$ is the same as the topology generated by the seminorms $p_n, n = 0, 1, \cdots$. We recall the next result from \cite[Proposition 3.3]{MR1999259} (also see \cite[Remark 1.3.1]{MR771478}).

\begin{proposition}\label{norm-equivalence}
For all $p \in \Z_+$, there exist constants $C_1 = C_1(d, p) > 0$ and $C_2 = C_2(d, p) > 0$ such that
\[\oldnorm[p]{\phi} \leq C_1 \|\phi\|_p \leq C_2 \oldnorm[p]{\phi}, \forall \phi \in \Sc(\R^d).\]
\end{proposition}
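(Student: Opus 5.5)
The plan is to express both norms through the harmonic oscillator $H = -\Delta + |x|^2$. Its eigenfunctions are the Hermite functions: $H h_n = (2|n|+d) h_n$. For $p \in \Z_+$ this gives
\[\oldnorm[p]{\phi}^2 = \sum_n (2|n|+d)^{2p}\langle \phi, h_n\rangle_0^2 = \|H^p\phi\|_0^2 .\]
The proposition therefore reduces to a two-sided comparison between $\|H^p\phi\|_0$ and the quantity $\sum_{|\alpha|+|\beta|\le 2p}\|x^\alpha\partial^\beta\phi\|_0^2$. We use the creation and annihilation operators $A_j^{\pm} = x_j \mp \partial_j$. They satisfy $[A_j^-, A_k^+] = 2\delta_{jk}$, and they act on Hermite functions by $A_j^+ h_n = \sqrt{2(n_j+1)}\,h_{n+e_j}$ and $A_j^- h_n = \sqrt{2n_j}\,h_{n-e_j}$. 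Moreover $H = \sum_j A_j^+A_j^- + d$, and $x_j$ and $\partial_j$ are half-sums and half-differences of $A_j^{\pm}$.

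\textbf{First inequality} ($\oldnorm[p]{\phi}\le C_1\|\phi\|_p$). Expand $H^p = (-\Delta+|x|^2)^p$ as a noncommutative polynomial. Commuting all multiplications by $x$ to the left (via $\partial_j x_j = x_j\partial_j + 1$) writes $H^p$ as a finite linear combination of operators $x^\alpha\partial^\beta$ with $|\alpha|+|\beta|\le 2p$. The coefficients depend only on $d$ and $p$. The triangle inequality then gives $\|H^p\phi\|_0 \le C\sum_{|\alpha|+|\beta|\le2p}\|x^\alpha\partial^\beta\phi\|_0$, and Cauchy--Schwarz bounds this by $C'\|\phi\|_p$.

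\textbf{Second inequality} ($\|\phi\|_p \le C\oldnorm[p]{\phi}$). Write each $x^\alpha\partial^\beta$ as a linear combination of words $W = A^{\epsilon_1}_{j_1}\cdots A^{\epsilon_m}_{j_m}$ of length $m\le 2p$. It then suffices to show $\|W\phi\|_0 \le C\|H^{m/2}\phi\|_0$ and $\|H^{m/2}\phi\|_0\le \|H^p\phi\|_0$; the latter holds because $2|n|+d\ge1$. For the first bound, expand $\phi=\sum_n c_n h_n$. The word $W$ maps $h_n$ to a multiple of $h_{n+\delta}$, where the shift $\delta$ is fixed by $W$. By the ladder formulas the coefficient is bounded by $\prod_{i}\sqrt{2(|n|+m)} \le C(2|n|+d)^{m/2}$. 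Since $n\mapsto n+\delta$ is injective, orthonormality gives
\[\|W\phi\|_0^2 \le C\sum_n (2|n|+d)^m c_n^2 = C\|H^{m/2}\phi\|_0^2 .\]
For odd $m$ we simply interpret $\|H^{m/2}\phi\|_0$ through the spectral sum. Summing over the finitely many pairs $(\alpha,\beta)$ yields the bound.

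The main obstacle is the bookkeeping in this second inequality: we must confirm that every $x^\alpha\partial^\beta$ with $|\alpha|+|\beta|\le 2p$ is a combination of ladder words of length at most $2p$ (immediate, since each $x_j$ and $\partial_j$ is a single $A^{\pm}$ combination), and control the eigenvalue growth uniformly. Both are handled by the explicit action of $A_j^\pm$ on $h_n$. The domain issue (Schwartz $\phi$) causes no trouble, because all the series converge rapidly.
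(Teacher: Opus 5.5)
Your proof is correct. The paper gives no argument of its own here: it simply recalls the result from \cite[Proposition 3.3]{MR1999259} (see also \cite[Remark 1.3.1]{MR771478}). Your harmonic-oscillator and ladder-operator argument therefore supplies a self-contained proof rather than an alternative to one in the text.

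The ingredients you use are the right ones:
\begin{itemize}
\item the identity $\oldnorm[p]{\phi}=\|H^p\phi\|_0$ for $H=-\Delta+|x|^2$;
\item normal ordering of $H^p$ with the $x$'s to the left, which matches the paper's definition of $\|\cdot\|_p$ via $x^\alpha\partial^\beta$;
\item the word estimate $\|W\phi\|_0\le C\|H^{m/2}\phi\|_0$, which rests on the coefficient bound $2(|n|+m)\le (1+2m)(2|n|+d)$ and injectivity of $n\mapsto n+\delta$.
\end{itemize}
What this route buys over a bare citation is transparency. The constants visibly depend only on $(d,p)$, through the normal-ordering coefficients and the ladder-operator bounds.

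One step deserves a line more than ``the series converge rapidly.'' Do not justify $W\phi=\sum_n \langle\phi,h_n\rangle_0\, Wh_n$ by convergence of the Hermite expansion in the topology of $\Sc$, because that convergence is usually derived from this very norm equivalence. Instead, compute the Hermite coefficients of $W\phi$ by duality: $\langle W\phi,h_k\rangle_0=\langle \phi,W^{\dagger}h_k\rangle_0$, where $W^{\dagger}$ is the reversed word with $(A_j^{+})^{\dagger}=A_j^{-}$. This holds by integration by parts on $\Sc$. Then apply Parseval to $W\phi\in\Sc\subset\mathcal{L}^2(\R^d)$. This gives your identity for $\|W\phi\|_0^2$ with no convergence issue.
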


Consequently, for $p \in \Z_+$, completing $\Sc(\R^d)$ with the inner-product $\inpr[p]{\cdot}{\cdot}$ gives us the same Hermite-Sobolev spaces $\Sc_p(\R^d)$ as when we complete it with respect to the other inner product $\oldinpr[p]{\cdot}{\cdot}$. Unless stated otherwise, for $p \in \Z_+$ we shall work with the norm $\|\cdot\|_p$. We shall write $\Sc$, $\Sc^\prime$ and $\Sc_p$, instead of $\Sc(\R^d)$, $\Sc^\prime(\R^d)$ and $\Sc_p(\R^d)$, respectively, for convenience of notation. The dimension $d$ shall be clear from the context.

\begin{theorem}[Product in $\Sc_p$ spaces, {\cite[Theorem 2.8]{sp-algebra}}]\label{product-Sp}
Let $p$ be any integer with $p > \frac{d+1}{2}$. Then for all $\phi_1, \phi_2 \in \Sc_p$,
\begin{equation}\label{product-continuity}
\|\phi_1 \phi_2\|_p \leq C \|\phi_1\|_p \|\phi_2\|_p,    
\end{equation}
for some positive constant $C = C_{p, d}$. In particular, $\Sc_p$ is an algebra.
\end{theorem}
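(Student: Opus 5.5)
By density of $\Sc$ in $\Sc_p$, it suffices to prove \eqref{product-continuity} for $\phi_1,\phi_2\in\Sc$. Once the bound holds on $\Sc$, the bilinear map $(\phi_1,\phi_2)\mapsto\phi_1\phi_2$ extends continuously to $\Sc_p\times\Sc_p$. This extension coincides with the pointwise product, because $\Sc_p\subset\mathcal L^2$ and convergence in $\|\cdot\|_p$ implies $\mathcal L^2$ convergence, hence a.e. convergence along a subsequence.

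\textbf{Expanding the norm.} We work with the norm $\|\cdot\|_p$ and fix $|\alpha|+|\beta|\le 2p$. The task is to bound $\|x^\alpha\partial^\beta(\phi_1\phi_2)\|_{\mathcal L^2}$. By Leibniz,
\[
x^\alpha\partial^\beta(\phi_1\phi_2)=\sum_{\gamma\le\beta}\binom{\beta}{\gamma}\,x^\alpha\,\partial^\gamma\phi_1\,\partial^{\beta-\gamma}\phi_2 .
\]
We write $\alpha=\alpha'+\alpha''$ and split the weight between the two factors. The rule is to put all weights on the factor carrying more derivatives, or on the one with total order at most $2p$. Each term then has the form $(x^{\alpha'}\partial^\gamma\phi_1)(x^{\alpha''}\partial^{\beta-\gamma}\phi_2)$ with $|\alpha'|+|\gamma|+|\alpha''|+|\beta-\gamma|\le 2p$.

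\textbf{Estimating each term.} Place one factor in $\mathcal L^2$ and the other in $L^\infty$. Since the total order is at most $2p$, one of the two factors, say the second, has order $k:=|\alpha''|+|\beta-\gamma|\le p$. For that factor we need
\[
\sup_x|x^{\alpha''}\partial^{\delta}\phi|\le C\|\phi\|_p \quad\text{whenever } |\alpha''|+|\delta|\le p .
\]
This follows from a weighted Sobolev embedding. The function $g=x^{\alpha''}\partial^\delta\phi$ satisfies $\|g\|_{H^s}\le C\|\phi\|_{p'}$ for some $p'$, using the known fact that $\Sc_p$ with Hermite norm controls $x^a\partial^b$ for $|a|+|b|\le 2p$. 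Commuting derivatives past the polynomial weights only produces lower-order terms of the same type, so they stay within order $2p$. Then $H^s\subset L^\infty$ for $s>d/2$. The orders must fit: we need $|\alpha''|+|\delta|+s\le 2p$ with $s>d/2$, i.e.\ $p+\lfloor d/2\rfloor+1\le 2p$. This is guaranteed by $p>\frac{d+1}{2}$, which is exactly where the hypothesis enters. The other factor has order at most $2p$, so its $\mathcal L^2$ norm is bounded by $\|\phi_1\|_p$ directly from the definition of $\|\cdot\|_p$.

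\textbf{Main obstacle.} The delicate step is the bookkeeping of the weights $x^\alpha$. The $L^\infty$ factor has to absorb its share of polynomial weight while leaving at least $d/2$ orders of margin for the Sobolev embedding. Splitting the weight proportionally to the derivative counts, and always putting the larger total on the $\mathcal L^2$ factor, should make the counting work. I would check the extreme cases explicitly: $|\beta|=2p$ with $\alpha=0$, and $|\alpha|=2p$ with $\beta=0$. Summing over the finitely many multi-indices then yields \eqref{product-continuity}, and in particular shows that $\Sc_p$ is an algebra.
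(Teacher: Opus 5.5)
The paper contains no proof of this statement; it is quoted from \cite[Theorem 2.8]{sp-algebra}, so there is no internal argument to compare yours with. On its own terms your proof is correct and self-contained. It uses Leibniz's rule, an $\mathcal L^2\times L^\infty$ split of each term, the classical embedding $H^s(\R^d)\subset L^\infty$ for integer $s>d/2$, and a density argument. In the density step you should take a common subsequence along which $\phi_1^n$, $\phi_2^n$ and $\phi_1^n\phi_2^n$ all converge a.e. You should also note that $\|\cdot\|_p$ on $\Sc_p$ is the continuous extension from $\Sc$, which Proposition~\ref{norm-equivalence} makes legitimate.

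Three corrections are worth making.

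\emph{The weight bookkeeping is not an obstacle.} For any splitting $\alpha=\alpha'+\alpha''$, the two orders $|\alpha'|+|\gamma|$ and $|\alpha''|+|\beta-\gamma|$ add up to $|\alpha|+|\beta|\le 2p$. The smaller one is therefore automatically at most $p$. No rule for distributing $x^\alpha$ is needed, and the extreme cases need no separate check.

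\emph{The phrase ``for some $p'$'' should read $p'=p$.} What you actually show is $\|x^{a}\partial^{b}\phi\|_{H^s}\le C\|\phi\|_p$. This holds because every term of $\partial^\eta(x^a\partial^b\phi)$ with $|\eta|\le s$ has the form $x^{a'}\partial^{b'}\phi$ with $|a'|+|b'|\le |a|+|b|+s\le 2p$.

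\emph{Your argument uses less than the stated hypothesis.} The count above needs only $\lfloor d/2\rfloor+1\le p$, that is, $p>d/2$. This follows from $p>\frac{d+1}{2}$, and for even $d$ the two conditions coincide on integers. For odd $d$, however, $p>d/2$ is strictly weaker. So the hypothesis is sufficient rather than ``exactly'' where your argument needs it, and your route in fact gives the algebra property under the slightly weaker assumption $p>d/2$.
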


\begin{remark}
For $u \in \Sc_p$, we have $\partial_j u \in \Sc_{p-1}$. To make sense of the product $u. \partial u$ in $\Sc_{p - 1}$, we need $p - 1 > \frac{d+1}{2}$, or equivalently, $p > \frac{d+1}{2} + 1$. We shall use this lower bound on $p$ several times in the subsequent discussion.
\end{remark}

Let $\sigma_{ij}, b_i, \, 1 \leq i, j \leq d$ be bounded $C^\infty(\R^d)$ functions with bounded derivatives. Consider the operators on $\Sc^\prime$,
\begin{equation}\label{L-A}
\begin{split}
A \phi &:= (A_1 \phi, A_2\phi, \dots, A_d\phi),\\
A_i\phi &:= \sum_{j=1}^d\sigma_{ji} \partial_j \phi, i = 1, 2, \cdots, d,\\
L\phi &:= \frac{1}{2}\sum_{i,j=1}^d ( \sigma \sigma^t )_{ij} \partial^2_{ij} \phi + \sum_{j=1}^d b_j \partial_j \phi.
\end{split}
\end{equation}
In the next result, we also use the Hilbert-Schmidt norm $\|A\phi\|_{HS(p)}^2 := \sum_{i = 1}^d \|A_i\phi\|_{p}^2$ for $\phi \in \Sc^\prime$, whenever the right hand side is defined.

\begin{theorem}[Monotonicity Inequality {\cite[Theorem 2.4]{alt-mono}}]\label{L2-alt-mono}
The pair of operators $(L, A)$ satisfy the Monotonicity inequality in $\|\cdot\|_p$ in the sense that
\begin{equation}\label{Monotoniticity-inequality}
2\inpr[p]{\phi}{L\phi} + \|A\phi\|_{HS(p)}^2\leq C\|\phi\|^2_p,\, \forall \phi\in \Sc
\end{equation}
where $C = C(d, p, \sigma, b) > 0$ is a positive constant not depending on $\phi$. Moreover, the dependence of $C = C(d, p, \sigma, b) > 0$ on $\sigma, b$ is only through the supremum of the derivatives up to order $2p$.

\end{theorem}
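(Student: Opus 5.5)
We argue directly in the norm $\|\cdot\|_p$. Since $\inpr[p]{\phi}{\psi}=\sum_{|\alpha|+|\beta|\le 2p}\inpr[0]{x^\alpha\partial^\beta\phi}{x^\alpha\partial^\beta\psi}$, the left side of \eqref{Monotoniticity-inequality} splits into a finite sum over multi-indices. We fix $\alpha,\beta$ with $|\alpha|+|\beta|\le 2p$, set $T:=x^\alpha\partial^\beta$, and aim to prove
\[
2\inpr[0]{T\phi}{TL\phi}+\sum_{i=1}^d\|TA_i\phi\|_0^2\le C\|\phi\|_p^2 .
\]
The idea is to write $TL\phi = LT\phi + [T,L]\phi$ and $TA_i\phi = A_iT\phi+[T,A_i]\phi$. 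This separates an $L^2$ monotonicity term for $\psi:=T\phi$ from commutator terms.

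\textbf{Step 1: the $L^2$ case.} For $\psi\in\Sc$ we show $2\inpr[0]{\psi}{L\psi}+\sum_i\|A_i\psi\|_0^2\le C\|\psi\|_0^2$. Writing $a=\sigma\sigma^t$ and integrating by parts,
\[
2\inpr[0]{\psi}{\tfrac12 a_{ij}\partial^2_{ij}\psi}=-\int a_{ij}\partial_i\psi\,\partial_j\psi-\int(\partial_ia_{ij})\psi\,\partial_j\psi .
\]
We also have $\sum_i\|A_i\psi\|_0^2=\int a_{jk}\partial_j\psi\,\partial_k\psi$. The two top-order terms cancel exactly. The remaining terms have the form $\int c_j\,\psi\,\partial_j\psi=-\tfrac12\int(\partial_jc_j)\psi^2$, where $c_j$ involves $b_j$ and $\partial_ia_{ij}$. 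These are bounded by $C\|\psi\|_0^2$, using one more derivative of the coefficients.

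\textbf{Step 2: the commutators.} Applied to $\psi=T\phi$, Step 1 leaves the cross terms
\[
2\inpr[0]{T\phi}{[T,L]\phi}+2\sum_i\inpr[0]{A_iT\phi}{[T,A_i]\phi}+\sum_i\|[T,A_i]\phi\|_0^2 .
\]
By Leibniz, $[\partial^\beta,L]$ is a sum of terms $(\partial^\gamma a)\partial^{\beta-\gamma}\partial^2\phi$ with $0<\gamma\le\beta$. Likewise $[x^\alpha,\partial_j]$ lowers the power of $x$ by one.

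\emph{Main obstacle.} The terms of order $|\alpha|+|\beta|+1$, namely those with $|\gamma|=1$ in $[T,L]$ and the terms $x^{\alpha-e_j}\partial^\beta\partial\phi$, are not a priori bounded by $\|\phi\|_p$. These are the terms where we must exploit structure. Paired against $T\phi$ or $A_iT\phi$, the top-order ones have the form $\int c\,\partial_k(T\phi)\,\partial_l(T'\phi)$ with $T'$ a neighbouring monomial operator. The $[T,A_i]$ contribution and the $|\gamma|=1$ part of $[T,L]$ combine, as in Step 1, into symmetric expressions. After one integration by parts these drop a derivative, leaving terms like $\int(\partial c)\,\partial(T\phi)\,T'\phi$ with total weight $\le 2p$ on each factor. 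For the $x$-commutators we use the identity $x_j\partial_j=\partial_jx_j-1$ and the norm equivalence (Proposition~\ref{norm-equivalence}) to reorder monomials. Any $x^{\alpha'}\partial^{\beta'}\phi$ with $|\alpha'|+|\beta'|\le 2p$ is then controlled by $\|\phi\|_p$, uniformly across orderings.

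Lower-order commutator terms are bounded directly by Cauchy–Schwarz, with constants given by sups of derivatives of $\sigma,b$ up to order $2p$. Summing over $(\alpha,\beta)$ gives \eqref{Monotoniticity-inequality}, with the asserted dependence of $C$. The delicate point is the bookkeeping in Step 2, ensuring that every top-order cross term is absorbed by symmetrisation rather than estimated. This is where we expect most of the work.
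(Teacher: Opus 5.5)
The paper gives no proof of this statement. It is imported from \cite{alt-mono}, where it is established in the alternate norm $\|\cdot\|_p$. Your framework (splitting $\inpr[p]{\cdot}{\cdot}$ over $T=x^\alpha\partial^\beta$ and commuting $T$ past $L$ and $A_i$) is the natural one, and your Step 1 is correct. The gap is that Step 2, which is the whole content of the theorem, is only asserted, and several concrete claims in your sketch are wrong.

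\begin{itemize}
\item The terms $a_{jk}\alpha_j x^{\alpha-e_j}\partial^{\beta+e_k}\phi$ coming from $[x^\alpha,\partial_j]$ have weight $|\alpha|+|\beta|$, not $|\alpha|+|\beta|+1$. They are summands of $\|\phi\|_p$ and harmless, so no reordering or appeal to Proposition~\ref{norm-equivalence} is needed.
\item Conversely, the real obstruction is larger than you say. When $|\alpha|+|\beta|=2p$, every piece of $2\sum_i\inpr[0]{A_iT\phi}{[T,A_i]\phi}$ is paired with $A_iT\phi$, not just its top part. Since $A_iT\phi$ has weight $2p+1$, even lower-order pieces such as $\sigma_{ji}\alpha_j x^{\alpha-e_j}\partial^\beta\phi$ or $(\partial^\beta\sigma_{ji})x^\alpha\partial_j\phi$ cannot be bounded by Cauchy--Schwarz.
\item A leftover of the form $\int(\partial c)\,\partial(T\phi)\,T'\phi$ still contains a factor of weight $2p+1$, so it is not yet under control.
\end{itemize}

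The missing step can be closed inside your own decomposition. Use $a=\sigma\sigma^t$ to write $L=\frac12\sum_i A_i^2+\tilde{\mathbf b}\cdot\nabla$ with $\tilde b_k=b_k-\frac12\sum_{i,j}\sigma_{ji}\partial_j\sigma_{ki}$. Put $\psi=T\phi$, $R_i=[T,A_i]$ and $d_i=\sum_j\partial_j\sigma_{ji}$, so that $A_i^*=-A_i-d_i$ in $\mathcal L^2$. Since $[T,A_i^2]=2A_iR_i+[R_i,A_i]$, you get
\[
2\inpr[0]{\psi}{[T,L]\phi}=-2\sum_i\inpr[0]{A_i\psi}{R_i\phi}+\sum_i\Bigl(\inpr[0]{\psi}{[R_i,A_i]\phi}-2\inpr[0]{d_i\psi}{R_i\phi}\Bigr)+2\inpr[0]{\psi}{[T,\tilde{\mathbf b}\cdot\nabla]\phi}.
\]
Thus the entire cross term $2\sum_i\inpr[0]{A_i\psi}{R_i\phi}$ cancels exactly, not only its top part. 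What remains needs one lemma: $R_i$, $[R_i,A_i]$ and $[T,\tilde{\mathbf b}\cdot\nabla]$ send $\phi$ to finite sums $c(x)\,x^{\alpha'}\partial^{\beta'}\phi$ with $c$ bounded and $|\alpha'|+|\beta'|\le|\alpha|+|\beta|$. This is a Leibniz check in which the top-order parts cancel in each commutator. Cauchy--Schwarz together with Step 1 applied to $\psi$ then finishes the proof.

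Note also that $[R_i,A_i]$ and $\partial^\beta\tilde{\mathbf b}$ involve $\partial_k\partial^\beta\sigma$, i.e.\ derivatives of order $2p+1$ when $|\beta|=2p$. Your integration by parts against $A_iT\phi$ does the same. So your closing claim that the constant has ``the asserted dependence'' on derivatives up to order $2p$ does not follow from this argument.
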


\section{The Burgers' equation}

We write $\Scsum := \oplus_{k = 1}^d \Sc(\R^d)$ and $\Scsum_p := \oplus_{k = 1}^d \Sc_p(\R^d)$. Here, and in what follows, we use the boldface notation $\vecu$ for elements of $\Scsum$ and $\Scsum_p$.

Consider the equation
\begin{equation}\label{Burger-PDE}
\frac{\partial\vecu (t,\cdot)}{\partial t} =  \frac{1}{2}\bigtriangleup\, \vecu(t,\cdot) + \vecu(t,\cdot) . \nabla\, \vecu(t,\cdot),\, t \geq 0
\end{equation}
with $\vecu(0, \cdot) = \vecu_0(\cdot) = (u_{01}(\cdot), \cdots, u_{0d}(\cdot)) \in \Scsum \subset \Scsum_p$ and $\vecu(t,\cdot) . \nabla\, \vecu(t,\cdot) = \sum_{j = 1}^d u_j(t,\cdot) \partial_j\, u(t,\cdot)$. The inner-product and norm on $\Scsum_p$ are defined in the usual way, i.e. for $\vecu = (u_1, \cdots, u_d), \vecv = (v_1, \cdots, v_d) \in \Scsum_p$, 
\[\inpr[p]{\vecu}{\vecv} := \sum_{j = 1}^d \inpr[p]{u_j}{v_j},\]
and
\[\|\vecu\|^2_p := \sum_{j = 1}^d \|u_j\|^2_p.\]
$\Scsum_p$ becomes a real separable Hilbert space with associated inner-product $\inpr[p]{\cdot}{\cdot}$ and norm $\|\cdot\|_p$.

\begin{definition}[$\Scsum_p$ and $\Scsum$ valued local Solution to \eqref{Burger-PDE}]
We say $(\vecu, \tau)$ is an $\Scsum_p$ valued local solution to \eqref{Burger-PDE} with $p - 1 \geq \frac{d+1}{2}$ and with initial $\vecu_0 \in \Scsum_p$, if 
\begin{enumerate}
    \item $\tau > 0$,
    
    \item $\vecu : [0, \tau) \to \Scsum_p$ is continuous, 

    \item the function $t \mapsto \left( \frac{1}{2}\bigtriangleup\, \vecu(t,\cdot) + \vecu(t,\cdot) . \nabla\, \vecu(t,\cdot) \right)$ on $[0, \tau)$ is Bochner integrable in $\Scsum_{p - 1}$,

    \item the following integral equation is satisfied in $\Scsum_{p-1}$:
\begin{equation}\label{Burger}
\vecu(t,\cdot) = \vecu_0 + \int_0^t \left( \frac{1}{2}\bigtriangleup\, \vecu(s,\cdot) + \vecu(s,\cdot) . \nabla\, \vecu(s,\cdot) \right)\, ds , \, t < \tau.
\end{equation}
\end{enumerate}

Moreover, we say 
$(\vecu, \tau)$ is an $\Scsum$ valued local solution to \eqref{Burger-PDE} if it is an $\Scsum_p$ valued solution for all $p$ with $p - 1 \geq \frac{d+1}{2}$.
\end{definition}

\begin{definition}[Maximal solution]
    An $\Scsum_p$ valued local solution $(\vecu, \tau)$ to \eqref{Burger-PDE} is said to be maximal if $\lim_{t \uparrow \tau}\|\vecu(t)\|_p = \infty$.
\end{definition}

In dimension 1, explicit solutions to the viscous Burgers' equation has been discussed in \cite[p. 207, subsection 4.4.1.b]{MR2597943}.

\subsection{Linearized Equation:}
We first linearize the given equation and study the existence and uniqueness of solutions to the linearized equation. 

Given $\bar \vecv(\cdot) \in \Scsum$ and a continuous $\mathbf{b}: [0, \infty) \to  \Scsum$, consider the linear equation in unknown $\vecv$ \begin{equation}\label{nwclclnr1}
 \vecv(t,\cdot)= \bar \vecv(\cdot) + \int_0^t \left( \frac{1}{2} \bigtriangleup \vecv(s,\cdot) +  \mathbf{b}(s,\cdot) \cdot \nabla \vecv(s,\cdot) \right)\, ds 
 \end{equation}

We discuss below results on the existence and uniqueness of solutions to \eqref{nwclclnr1}. This can be directly obtained from \cite[Theorem 1]{MR2590157} and Theorem \ref{L2-alt-mono}, keeping $A \equiv 0$. However, we describe below an approach using a linear Stochastic PDE in order to obtain a stochastic representation of the solution to \eqref{nwclclnr1}, yielding more information about the structure of the solution.

\begin{theorem}[Existence and Uniqueness of Stochastic PDE version of \eqref{nwclclnr1}, {\cite[Theorem 1]{MR2590157}}]\label{GMR-existence-uniqueness}
The Stochastic PDE
\begin{equation}\label{linear-SPDE}
\mathbf{V}(t,\cdot)= \bar \vecv(\cdot) + \int_0^t \left( \frac{1}{2} \bigtriangleup \mathbf{V}(s,\cdot) +  \mathbf{b}(s,\cdot)\cdot  \nabla \mathbf{V}(s,\cdot) \right)\, ds + \int_0^t \nabla \mathbf{V}(s, \cdot). dB_s, \, t \geq 0.   
\end{equation}
has a unique strong solution, where $\{B_t\}_{t \geq 0}$ is a $d$-dimensional standard Brownian motion. 
\end{theorem}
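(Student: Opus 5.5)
The plan is to reduce the statement to the abstract existence and uniqueness result for linear SPDEs in \cite[Theorem 1]{MR2590157}, applied separately to each component. Write the equation \eqref{linear-SPDE} componentwise: for $k = 1, \dots, d$,
\[V_k(t) = \bar v_k + \int_0^t L_s V_k(s)\, ds + \int_0^t A V_k(s) \cdot dB_s,\]
with $A_i = \partial_i$ (so $\sigma = I$) and
\[L_s\phi = \frac{1}{2}\bigtriangleup\phi + \sum_{j=1}^d b_j(s)\,\partial_j\phi.\]
The components decouple, since the drift $\mathbf{b}$ is given and not the unknown. The task is therefore to solve $d$ scalar linear SPDEs of the form $dY_t = L_s Y_t\,dt + A Y_t\cdot dB_t$ in $\Sc_p$, with initial data $\bar v_k \in \Sc$. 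The framework of \cite{MR2590157} yields a unique strong $\Sc_p$-valued solution, for every $p$, once two hypotheses hold: the coefficients are continuous as maps into the relevant spaces, and the pair $(L_s, A)$ satisfies the Monotonicity inequality in $\|\cdot\|_p$ with a constant uniform in $s$ on compact time intervals.

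For the Monotonicity inequality I use Theorem \ref{L2-alt-mono} with $\sigma = I$ and drift $b(s) = \mathbf{b}(s,\cdot)$. Since $\mathbf{b}(s) \in \Scsum$, each $b_j(s)$ is smooth and bounded with bounded derivatives, so the theorem applies for each fixed $s$. The resulting constant depends on $b$ only through $\sup_x |\partial^\alpha b_j(s,x)|$ for $|\alpha| \le 2p$. Each such supremum is bounded by a seminorm $p_n(b_j(s))$, hence by some $\|b_j(s)\|_q$ by the equivalence of topologies (Proposition \ref{norm-equivalence} together with the identification with the seminorms $p_n$). Continuity of $s \mapsto \mathbf{b}(s)$ in $\Scsum$ makes these bounds uniform on $[0,T]$, which gives
\[2\inpr[p]{\phi}{L_s\phi} + \|A\phi\|^2_{HS(p)} \le C_T\|\phi\|_p^2, \qquad s \le T,\ \phi\in\Sc.\]

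The remaining hypotheses are routine. $L_s : \Sc_p \to \Sc_{p-2}$ and $A : \Sc_p \to \Sc_{p-1}$ are bounded, the first because multiplication by a Schwartz function is continuous on every $\Sc_q$. The map $s\mapsto L_s$ is continuous in operator norm by continuity of $\mathbf{b}$. With these in hand, \cite[Theorem 1]{MR2590157} gives existence and uniqueness in each $\Sc_p$, and consistency across $p$ yields an $\Scsum$-valued strong solution.

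Uniqueness can also be seen directly. The difference $W$ of two solutions has zero initial value. By It\^o's formula for $\|W_t\|_{p-1}^2$ (or in a lower norm), the Monotonicity inequality and Gronwall's lemma give $\Exp\|W_t\|^2 \le 0$.

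I expect the main obstacle to be the time dependence of the drift. Theorem \ref{L2-alt-mono} is stated for time-independent coefficients, and \cite{MR2590157} may be phrased that way too. So the key step is the uniform-in-$s$ control of the monotonicity constant described above. If \cite{MR2590157} does not cover time-dependent coefficients directly, I would redo its Galerkin (Hermite-projection) approximation argument, using the uniform constant $C_T$ in the a priori estimates.
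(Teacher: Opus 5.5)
Your proposal is correct and follows the paper's proof. On each $[0,T]$ you use continuity of $\mathbf{b}$ into $\Scsum$ to bound the derivatives of $b_j(s)$ uniformly in $s$, which makes the constant in Theorem \ref{L2-alt-mono} (with $\sigma = I$) uniform, and then you invoke \cite[Theorem 1]{MR2590157}. The differences are minor: the paper gets the sup-norm bounds on derivatives from \cite[Theorem 4.1]{MR2373102} rather than from the equivalence with the seminorms $p_n$, and it explicitly patches the solutions on $[0,T]$ together to reach $[0,\infty)$, a step you leave implicit.
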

\begin{proof}
For any fixed $T > 0$, the function $\mathbf{b}: [0, T] \to  \Scsum$ is continuous by our assumption. In particular, for any $p > 0$, $\mathbf{b}: [0, T] \to  \Scsum_p$ is also continuous and hence
\[\sup_{t \in [0, T]} \|b(t, \cdot)\|_p < \infty.\]
Using \cite[Theorem 4.1]{MR2373102}, 
\[\sup_{t \in [0, T]} \|\partial^\alpha b(t, \cdot)\|_{\mathcal{L}^\infty(\R^d)} < \infty.\]
We then obtain the existence and uniqueness of solutions to the linear Stochastic PDE \eqref{linear-SPDE} on $[0, T]$ using \cite[Theorem 1]{MR2590157} and Theorem \ref{L2-alt-mono}. The result for the time interval $[0, \infty)$ follows by patching up the solutions on finite time intervals $[0, T]$.
\end{proof}

The existence and uniqueness of solutions to \eqref{nwclclnr1} is obtained from the solution in Theorem \ref{GMR-existence-uniqueness} as follows.

\begin{corollary}[Existence and Uniqueness of \eqref{nwclclnr1}]\label{corollary-existence-uniquness-PDE}
Under the above assumptions, the PDE \eqref{nwclclnr1} has an $\Scsum$ valued unique solution.
\end{corollary}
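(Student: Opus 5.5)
The plan is to produce a solution of \eqref{nwclclnr1} by averaging the stochastic solution of Theorem \ref{GMR-existence-uniqueness} over the Brownian motion, and to prove uniqueness with the Monotonicity inequality, Theorem \ref{L2-alt-mono}.

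\textbf{Existence.} Let $\mathbf{V}$ be the unique strong solution of \eqref{linear-SPDE}, and set $\vecv(t) := \Exp\, \mathbf{V}(t)$, the expectation taken as a Bochner integral in $\Scsum_p$ for each $p$. This requires $\Exp \sup_{t\le T}\|\mathbf{V}(t)\|_p^2<\infty$ for every $p$ and $T$. I expect the existence theory of \cite{MR2590157} to supply this; otherwise I would derive it myself.
\begin{itemize}
\item Apply It\^o's formula to $\|\mathbf{V}(t)\|_p^2$.
\item Bound the drift using Theorem \ref{L2-alt-mono}, with $\sigma = I$ and $b = \mathbf{b}(t)$ frozen in time. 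The constant is uniform in $t\in[0,T]$, because $\sup_{t\le T}\|\partial^\alpha \mathbf{b}(t)\|_\infty<\infty$, exactly as in the proof of Theorem \ref{GMR-existence-uniqueness}.
\item Conclude with BDG and Gronwall.
\end{itemize}
This step is what forces the full Monotonicity inequality, including the $\|A\phi\|^2_{HS(p)}$ term, rather than the version with $A \equiv 0$.

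Next, take expectations in \eqref{linear-SPDE}. The stochastic integral $\int_0^t \nabla \mathbf{V}\cdot dB_s$ is a square integrable martingale in $\Scsum_{p-1}$, since $\Exp\int_0^T\|\nabla \mathbf{V}\|_{p-1}^2\,ds \lesssim \Exp\int_0^T \|\mathbf{V}\|_p^2\,ds <\infty$, so it has mean zero. Expectation commutes with the Bochner integral in time and with the bounded operators
\[
\tfrac12\bigtriangleup : \Scsum_p\to\Scsum_{p-2}, \qquad \phi\mapsto \mathbf{b}(s)\cdot\nabla\phi : \Scsum_p\to\Scsum_{p-1}.
\]
The second is bounded by Theorem \ref{product-Sp}, or simply because $\mathbf{b}(s)\in\Scsum$. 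Hence $\vecv$ satisfies \eqref{nwclclnr1}.

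Continuity of $t\mapsto\vecv(t)$ in every $\Scsum_p$ follows from continuity of the paths of $\mathbf{V}$ and dominated convergence, using the moment bound above. Since this holds for all $p$, $\vecv$ is $\Scsum$ valued.

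\textbf{Uniqueness.} Let $\vecv_1,\vecv_2$ be two $\Scsum$ valued solutions and set $\mathbf{w}=\vecv_1-\vecv_2$. Then $\mathbf{w}(0)=0$ and $\mathbf{w}$ solves the linear equation. Each component of $\mathbf{w}$ is $C^1$ in time with values in $\Sc_p$, so
\[
\frac{d}{dt}\|\mathbf{w}(t)\|_p^2 = 2\inpr[p]{\mathbf{w}}{\tfrac12\bigtriangleup \mathbf{w}+\mathbf{b}(t)\cdot\nabla\mathbf{w}}.
\]
Componentwise this is $2\inpr[p]{w_i}{L_t w_i}$ with $L_t = \tfrac12\bigtriangleup + \mathbf{b}(t)\cdot\nabla$. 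Theorem \ref{L2-alt-mono} then gives a bound $\le C\|w_i\|_p^2$, where $\|A\phi\|_{HS(p)}^2\ge 0$ may be dropped. The constant $C$ is uniform on $[0,T]$ because it depends on $\mathbf{b}$ only through sup norms of derivatives up to order $2p$. Gronwall yields $\mathbf{w}\equiv 0$.

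\textbf{Main obstacle.} The delicate step is justifying the moment bounds and the interchanges of $\Exp$ with the operators and the stochastic integral in the existence part. If that becomes cumbersome, a fallback is to cite \cite[Theorem 1]{MR2590157} with $A\equiv 0$ directly for existence, as the paper notes, and keep only the uniqueness argument above.
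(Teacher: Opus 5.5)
Your proposal is correct and follows the paper's proof. For existence, the paper also takes expectations in the SPDE from Theorem \ref{GMR-existence-uniqueness}, citing \cite[Lemma 1]{MR2590157} for the integrability you propose to derive yourself. For uniqueness, it likewise uses the energy identity, Theorem \ref{L2-alt-mono} with the nonnegative $\|A\phi\|_{HS(p)}^2$ term discarded, and Gronwall; working in $\|\cdot\|_p$ instead of the paper's $\|\cdot\|_{p-1}$ makes no difference, since both solutions are $\Scsum$-valued.
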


\begin{proof}
Taking expectation on both sides of equation \eqref{linear-SPDE}, interpreted component-wise, we have
\[\Exp \mathbf{V}(t,\cdot)= \bar \vecv(\cdot) + \int_0^t \left( \frac{1}{2} \bigtriangleup\, \Exp \mathbf{V}(s,\cdot) +  \mathbf{b}(s,\cdot)\cdot  \nabla\, \Exp \mathbf{V}(s,\cdot) \right)\, ds, \, t \geq 0,\]
which shows $\Exp \mathbf{V}(\cdot)$ is a solution to \eqref{nwclclnr1}. Here, the integrability of $\Exp \mathbf{V}(\cdot)$ and other terms follow from \cite[Lemma 1]{MR2590157}. Note that $\Exp \mathbf{V}(\cdot)$ is in $\Scsum_p$ for all $p$ and hence is in $\Scsum$. This shows the existence of an $\Scsum$ valued solution.

To establish the uniqueness, let $\vecv$ and $\tilde \vecv$ be two $\Scsum$ valued solutions. Then, for any positive integer $p$ with $p - 1 > \frac{d+1}{2}$,
\begin{align*}
\|\vecv(t) - \tilde \vecv(t)\|_{p-1}^2 = \int_0^t \inpr[p-1]{\vecv(s) - \tilde \vecv(s)}{\bigtriangleup(\vecv(s) - \tilde \vecv(s))\,  + 2 \mathbf{b}(s,\cdot)\cdot  \nabla(\vecv(s) - \tilde \vecv(s))}\, ds.
\end{align*}
Here, we use the fact that $\mathbf{b} \in \Scsum$ and hence, the product term $\mathbf{b}(s,\cdot)\cdot  \nabla(\vecv(s) - \tilde \vecv(s))$ makes sense in $\Scsum$.
The uniqueness follows from Theorem \ref{L2-alt-mono}, together with an application of Gronwall's inequality.
\end{proof}

\subsection{Burgers' Equation: The Non-Linear Case}
Given $\vecu_0 \in \Scsum$, we iteratively define continuo  us $\vecu_n:[0, \infty)] \to \Scsum, n = 1, 2, \cdots$ by the following iteration scheme. If $\vecu_n$ is known, then define $
\vecu_{n+1}$ via the unique solution to the linear PDEs
\begin{equation}\label{iteration-linear}
\frac{\partial \vecu_{n+1}}{\partial t}(t) = \frac{1}{2} \bigtriangleup \vecu_{n+1}(t) + \vecu_n(t) \cdot\nabla \vecu_{n+1}(t), t > 0; \quad \vecu_{n+1}(0) = \vecu_0.
\end{equation}
In the above equation, as $\vecu_n(t)$ is $\Scsum$ valued, the product term $\vecu_n(t) \cdot\nabla \vecu_{n+1}(t)$ makes sense by Theorem \ref{product-Sp}. Here, by a solution we mean that $\vecu_{n+1}$ satisfies
\begin{equation}\label{iteration-integral-form}
\vecu_{n+1}(t) = \vecu_0 +  \int_0^t \left[\frac{1}{2}\bigtriangleup \vecu_{n+1}(s) + \vecu_n(s) \cdot\nabla \vecu_{n+1}(s)\right]\, ds, \, t \geq 0.
\end{equation}
The existence and uniqueness of solutions of \eqref{iteration-integral-form} follows from Corollary \ref{corollary-existence-uniquness-PDE} and the solution $\vecu_{n+1}:[0, \infty) \to \Scsum$ is continuous.

To prove existence of a solution to \eqref{Burger-PDE}, we need to obtain bounds on the difference of successive solutions $\vecu_{n+1} - \vecu_n$. We do this by applying the two-step monotonicity inequality, proved in Lemma 3.6 below. Since the constant in the two-step Monotonicity inequality depends on the bounds of the derivatives, we need to control the distances of various derivatives of $\vecu_n$'s from the corresponding derivatives of $\vecu_0$ by restricting the time intervals on which these are defined, i.e. for $|\alpha| \leq 2p$, we need to control the norm of $\partial^\alpha \vecu_n(t) - \partial^\alpha \vecu_0$ in $\Scsum_q$ for a suitable $q\leq p$. We do this via certain stopping times, which we introduce below (see also \cite{brajeev-arxiv}).

Fix a positive integer $p$ and consider the function $K : \{0, 1, \cdots, 2p\} \to \N$ defined by $ K(0) := 1$ and $K(k) := \lfloor\frac{k+1}{2}\rfloor$ otherwise. Here, $\lfloor r \rfloor$ denotes the greatest integer less or equal to a real number $r$.

Consider for any fixed $\lambda > 0$ and for any positive integer $n$, and for multi-indices $\alpha$ with $0 \leq |\alpha| \leq 2p$, define
\begin{align}\label{stoptime}
\begin{split}
\tau_{n, \alpha}^{p, \lambda} &:= \inf\left\{t > 0 \mid \|\partial^\alpha \vecu_n(t) - \partial^\alpha \vecu_0\|_{p-K(|\alpha|)} \geq \lambda \right\} \wedge T,\\
\tau_n^{p, \lambda} &:= \left( \bigwedge_{\alpha: |\alpha| = 0}^{2p} \tau_{n, \alpha}^{p, \lambda} \right) \wedge T \wedge \tau_{n-1}^{p, \lambda}.
\end{split}
\end{align}
The above stopping times $\tau_n^{p, \lambda}$ yield norm-estimates for $\partial^\alpha \vecu_n(t) - \partial^\alpha \vecu_0$, which are used in Theorem \ref{existence-Burger} in establishing the convergence of $\vecu_n$'s by estimating the differences $\vecu_{n+1} - \vecu_n$.

By definition, $\tau_n^{p, \lambda}$ is
\begin{enumerate}[label=(\roman*)]
    \item non-decreasing in $\lambda$, for fixed $n$ and $p$
    \item non-increasing in $n$, for fixed $\lambda$ and $p$
    \item non-increasing in $p$, for fixed $n$ and $\lambda$
\end{enumerate}
We shall look at $\lim_{n\to\infty}\tau_n^{p, \lambda}=: \tau^{p, \lambda}$ and $\lim_{p \to \infty} \tau^{p, \lambda} =:\tau^\lambda$. By construction,
\begin{enumerate}[label=(\roman*)]
    \item $\tau^{p, \lambda}$ is non-decreasing in $\lambda$, for fixed $p$
    \item $\tau^{p, \lambda}$ is non-increasing in $p$, for fixed $\lambda$
    \item $\tau^{\lambda}$ is non-decreasing in $\lambda$.
\end{enumerate}

\begin{proposition}\label{local-existence}
    We have $\tau^{p, \lambda}, \tau^{\lambda} \in (0, T]$ and consequently $\tau := \lim_{\lambda \to \infty} \tau^\lambda \in (0, T]$.
\end{proposition}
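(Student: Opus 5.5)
The plan is to find a deterministic time $t^*=t^*(p,\lambda,\vecu_0,T)>0$, \emph{independent of $n$}, such that $\tau_n^{p,\lambda}\geq t^*$ for every $n$. Since $\tau_n^{p,\lambda}$ is non-increasing in $n$, this gives $\tau^{p,\lambda}\geq t^*>0$. The upper bound $\tau^{p,\lambda}\leq T$ holds by construction. The statements about $\tau^\lambda$ and $\tau$ then follow from the monotonicity properties listed before the proposition, provided the lower bound on $\tau^{p,\lambda}$ can be made uniform in $p$; this is discussed at the end.

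The lower bound will be proved by induction on $n$. Fix $\alpha$ with $|\alpha|\le 2p$ and set $q=p-K(|\alpha|)$ and $\mathbf w(t)=\partial^\alpha\vecu_{n+1}(t)-\partial^\alpha\vecu_0$. Differentiating \eqref{iteration-integral-form} (all objects are $\Scsum$-valued, so this is legitimate) gives
\[
\partial_t \mathbf w=\tfrac12\bigtriangleup \mathbf w+\vecu_n\cdot\nabla\mathbf w+\sum_{0\ne\beta\le\alpha}\binom{\alpha}{\beta}\partial^\beta\vecu_n\cdot\nabla\partial^{\alpha-\beta}\vecu_{n+1}+\vecu_n\cdot\nabla\partial^\alpha\vecu_0+\tfrac12\bigtriangleup\partial^\alpha\vecu_0 .
\]
I then compute $\frac{d}{dt}\|\mathbf w\|_q^2$. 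The transport-diffusion part $2\inpr[q]{\mathbf w}{\frac12\bigtriangleup\mathbf w+\vecu_n\cdot\nabla\mathbf w}$ is bounded by $C\|\mathbf w\|_q^2$ using Theorem \ref{L2-alt-mono} with $\sigma=I$, $A$ discarded, and $b=\vecu_n(t)$. The constant depends only on $\sup$-norms of derivatives of $\vecu_n$ up to order $2q$. For $t<\tau_n^{p,\lambda}$ these are controlled by $\lambda$ and $\vecu_0$, via the stopping-time bounds, the embedding \cite[Theorem 4.1]{MR2373102}, and $p-1>\frac{d+1}{2}$. The remaining terms are handled by the Cauchy--Schwarz inequality and Theorem \ref{product-Sp}. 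The factors $\partial^\beta\vecu_n$ are bounded for $t<\tau_n^{p,\lambda}$ by $\lambda+\|\partial^\beta\vecu_0\|_{p-K(|\beta|)}$. The lower-order factors $\partial^{\alpha-\beta}\vecu_{n+1}$ are bounded either by $\lambda+\|\cdot\|$ of $\vecu_0$, since they are themselves among the quantities defining $\tau_{n+1}^{p,\lambda}$, or by $\|\mathbf w\|_q$ plus $\vecu_0$-terms. The indices $K(k)=\lfloor\frac{k+1}{2}\rfloor$ are designed exactly so that these products land in the right $\Sc_q$ after losing one derivative. The outcome is a differential inequality
\[
\frac{d}{dt}\|\mathbf w(t)\|_q^2\le C_1\|\mathbf w(t)\|_q^2+C_2,\qquad t<\tau_n^{p,\lambda}\wedge\tau_{n+1,\cdot}^{p,\lambda},
\]
with $C_1,C_2$ depending only on $(p,d,\lambda,\vecu_0)$. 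Since $\mathbf w(0)=0$, Gronwall gives $\|\mathbf w(t)\|_q^2\le C_2te^{C_1t}$.

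Let $t^*>0$ solve $C_2t^*e^{C_1t^*}=\lambda^2/2$, taking the minimum over the finitely many $\alpha$. By continuity of $\vecu_{n+1}$, no $\|\mathbf w\|_q$ can reach $\lambda$ before $t^*\wedge\tau_n^{p,\lambda}$. Hence $\tau_{n+1}^{p,\lambda}\geq t^*\wedge T$ whenever $\tau_n^{p,\lambda}\ge t^*\wedge T$. The base case $n=1$ is the same computation with $\vecu_0$ in place of $\vecu_n$, so the induction closes.

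The main obstacle is bookkeeping the highest-order commutator terms so that nothing requires more than the stopped norms provide, in particular when $|\alpha|=2p$ and $q=0$. I would first try to absorb these terms by splitting off the $\beta$ with $|\beta|=1$ and bounding the rest via Theorem \ref{product-Sp}. A second difficulty is passing $p\to\infty$: $t^*$ as constructed depends on $p$. To conclude $\tau^\lambda>0$, I would try to show that the constants can be chosen to depend only on low-order norms of $\vecu_0\in\Scsum$, or else bound $\tau^{p,\lambda}$ below by comparison with a fixed $p_0$. I expect this uniformity to be the delicate point.
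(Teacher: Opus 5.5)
Your fixed-$p$ argument takes a different route from the paper's. The paper argues by contradiction along a subsequence on which $\tau_{n_k}^{p,\lambda}$ drops strictly. It bounds the stopped norm by $C_\lambda\tau_{n_k}^{p,\lambda}$ by estimating the integrand of the integral equation directly. That works for $\alpha=0$, because $K(0)=K(2)=1$. For $|\alpha|\ge 1$, however, it needs $\bigtriangleup\partial^\alpha\vecu_n$ in $\Sc_{p-K(|\alpha|)}$, which involves derivatives of order beyond $2p$ that the stopping does not control. Your energy/Gronwall bootstrap in $n$ avoids this loss, since the Laplacian enters only through a monotonicity inequality. For fixed $p$ it is therefore the sounder route. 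Two execution points need fixing:
\begin{enumerate}
\item Theorem \ref{L2-alt-mono} with $b=\vecu_n(t)$ in $\|\cdot\|_{p-1}$ requires sup-bounds on derivatives of $\vecu_n$ of order $2p-2$. The stopping only places these in $\Sc_1$, which does not embed into bounded functions for $d\ge 4$. You need a tame transport estimate in terms of $\|\vecu_n(t)\|_p$, as in \eqref{moneqn2-cal3}.
\item Theorem \ref{product-Sp} is unavailable for $q\le\frac{d+1}{2}$, in particular for $q=0$. There you need Moser or Gagliardo--Nirenberg-type product bounds.
\end{enumerate}

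The genuine gap is $\tau^\lambda>0$, and it cannot be filled. Your comparison with a fixed $p_0$ goes the wrong way, since $\tau^{p,\lambda}\le\tau^{p_0,\lambda}$ for $p\ge p_0$. Nor can constants built from low-order norms of $\vecu_0$ control exit times from a fixed $\lambda$-ball of $\|\cdot\|_{p-1}$ as $p\to\infty$. In fact the claim is false for generic data. Take $t_0\in(0,T]$ with $f:=\vecu_1(t_0)-\vecu_0\neq 0$; then $\|f\|_{p-1}\to\infty$ as $p\to\infty$, for the following reason. The sum $\|f\|_{p-1}^2$ contains $\int x_i^{4p-4}|f|^2\,dx$ and $\int|\partial_i^{2p-2}f|^2\,dx$. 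The first is at least $2^{4p-4}\int_{\{|x_i|>2\}}|f|^2\,dx$. If instead $f$ vanishes a.e. off $[-2,2]^d$, then $\hat f$ is entire and nonzero, so the second diverges. By continuity of $t\mapsto\|\vecu_1(t)-\vecu_0\|_{p-1}$, this gives $\tau^{p,\lambda}\le\tau_1^{p,\lambda}<t_0$ for all large $p$. Since $(\vecu_1(t)-\vecu_0)/t\to\frac12\bigtriangleup\vecu_0+\vecu_0\cdot\nabla\vecu_0$ as $t\downarrow 0$, such $t_0$ exist arbitrarily close to $0$ unless this expression vanishes. Hence $\tau^\lambda=0$ for every $\lambda$, and $\tau=0$.

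The paper handles this step in one line, asserting that the quotients $\|\vecu_k(\tau^{p_k,\lambda})-\vecu_0\|_{p_k-1}/\tau^{p_k,\lambda}$ ``still remain bounded''. That tacitly assumes a $p$-independent $C_\lambda$, which is exactly what fails. Along these lines only $\tau^{p,\lambda}\in(0,T]$ for each fixed $p$ is provable. The assertions about $\tau^\lambda$ and $\tau$ require a different definition of these times, for instance fixing $p$ and letting $\lambda\to\infty$.
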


\begin{proof}
We first show that $\tau^{p, \lambda} > 0$.

If possible, let $\tau^{p, \lambda} = \lim_{n\to\infty}\tau_n^{p, \lambda} = 0$. Then, there exists a subsequence $\{n_k\}_k$ such that $\tau_{n_k}^{p, \lambda} < \tau_{n_k-1}^{p, \lambda}$, $k\geq2$ and a multi-index $\alpha$ with $0 \leq |\alpha| \leq 2p$ such that
\[\|\partial^\alpha 
\vecu_{n_k}(\tau_{n_k}^{p, \lambda}) - \partial^\alpha \vecu_0\|_{p-K(|\alpha|)} = \lambda.\]
Here, we claim that,
\begin{align*}
 \|\partial^\alpha \vecu_{n_k}(\tau_{n_k}^{p, \lambda}) - \partial^\alpha \vecu_0\|_{p-K(|\alpha|)} \leq C_\lambda\, \tau_{n_k}^{p, \lambda}. 
\end{align*}
If we assume the claim, then $\left\{\frac{\|\partial^\alpha \vecu_{n_k}(\tau_{n_k}^{p, \lambda}) - \partial^\alpha \vecu_0\|_{p-K(|\alpha|)}}{\tau_{n_k}^{p, \lambda}} \right\}_k$ is a bounded sequence, with the numerators appearing in the terms being upper bounded. However, this contradicts $\tau_{n_k}^{p, \lambda} \downarrow \tau^{p, \lambda} = 0$. Hence, we must have $\tau^{p, \lambda} > 0$.

We first prove the above claim for $\alpha \equiv 0$. Start by assuming
\[\|\vecu_{n_k}(\tau_{n_k}^{p, \lambda}) - \vecu_0\|_{p-1} = \lambda.\]
Due to the bounds imposed by stopping, using \eqref{iteration-linear} and the norm estimate in Theorem \ref{product-Sp}, we have,
\begin{equation}\label{main-estimate}
\|\vecu_{n_k}(\tau_{n_k}^{p, \lambda}) - \vecu_0\|_{p-1} \leq \int_0^{\tau_{n_k}^{p, \lambda}} \left[ \frac{1}{2} \|\bigtriangleup \vecu_{n_k}(s)\|_{p-1} + \|\vecu_{n_k+1}(s) .\nabla \vecu_{n_k}(s)\|_{p-1} \right] \, ds \leq C_\lambda\, \tau_{n_k}^{p, \lambda},
\end{equation}
for some $C_\lambda > 0$. This proves the claim for $\alpha \equiv 0$. For a general $\alpha$ with $0 \leq |\alpha| \leq 2p$, consider the equation satisfied by the derivative term $\partial^\alpha \vecu_{n+1}$. Applying $\partial^\alpha$ on both sides of \eqref{iteration-linear}, we have
\[\frac{\partial \partial^\alpha \vecu_{n+1}}{\partial t}(t) = \frac{1}{2} \bigtriangleup\partial^\alpha \vecu_{n+1}(t) + \partial^\alpha \left(\vecu_n(t). \nabla \vecu_{n+1}(t)\right), t > 0; \quad \partial^\alpha \vecu_{n+1}(0) = \partial^\alpha \vecu_0.\]
Due to the bounds imposed by stopping, we can obtain the relevant bound of the term $\|\partial^\alpha \vecu_{n_k}(\tau_{n_k}^{p, \lambda}) - \partial^\alpha \vecu_0\|_{p-K(|\alpha|)}$ by looking at the corresponding integral equation as in \eqref{main-estimate}.

Now, we look at the decreasing limit $\lim_{p \to \infty} \tau^{p, \lambda} = \tau^\lambda$. Note that, by construction, $\tau^{p, \lambda}$ is decreasing in $p$ for every fixed $\lambda > 0$. An argument similar to the above, but for a sequence $\{\tau^{p_k, \lambda}\}_k$ instead of $\{\tau_{n_k}^{p, \lambda}\}_k$, shows that the terms $\frac{\|\vecu_k(\tau^{p_k, \lambda}) - \vecu_0\|_{p_k-1}}{\tau^{p_k, \lambda}}$ still remain bounded, leading to a contradiction if we assume $\tau^\lambda = 0$. Hence, $\tau^\lambda > 0$ for any $\lambda > 0$.

By construction, $\tau^\lambda$ is increasing in $\lambda$. Hence, $0 < \tau \leq T$.
\end{proof}
Now, we show that our proposed iteration scheme converges in appropriate norms $\|\cdot\|_p$ and yields a solution to \eqref{Burger} locally in the corresponding $\Scsum_p$ space. The following two-step monotonicity inequality is used in the proof of existence, similar to the use of a two-step monotonicity inequality to obtain the existence of certain Stochastic PDEs (see \cite[Theorem 3.3 and Theorem 4.3]{brajeev-arxiv}).

\begin{lemma}\label{Mon-inq2}
Fix $R > 0$. For any $p>\frac{d+1}{2}$ and $\bm\phi,\bm\psi,\bm\xi\in B_p(R):=\{\bm\varphi\in\Scsum_p:\|\bm\varphi\|_p\leq R\}$, 
\begin{align}\label{Mon-inq2-eqns}
\begin{split}
& \inpr[p-1]{\bm\phi-\bm\psi}{\bigtriangleup(\bm\phi-\bm\psi)} + 2\inpr[p-1]{\bm\phi-\bm\psi}{\bm\psi. \nabla\bm\phi- \bm\xi. \nabla\bm\psi}\\
& \quad \leq C(R,p,d)\, \|\bm\phi-\bm\psi\|_{p-1}^2 + \|\bm\psi-\bm\xi\|_{p-1}^2,
\end{split}
\end{align}
where $C(R,p,d)$ is a positive constant.  
\end{lemma}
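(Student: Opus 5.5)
Set $\bm\eta := \bm\phi-\bm\psi$ and split the nonlinear difference as
\[
\bm\psi.\nabla\bm\phi-\bm\xi.\nabla\bm\psi \;=\; \bm\psi.\nabla\bm\eta \;+\; (\bm\psi-\bm\xi).\nabla\bm\psi .
\]
The left side of \eqref{Mon-inq2-eqns} then becomes
\[
\inpr[p-1]{\bm\eta}{\bigtriangleup\bm\eta + 2\bm\psi.\nabla\bm\eta} + 2\inpr[p-1]{\bm\eta}{(\bm\psi-\bm\xi).\nabla\bm\psi}.
\]
The first term is a linear second-order operator acting on $\bm\eta$, with drift $\bm\psi$. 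It is controlled by the Monotonicity inequality, Theorem \ref{L2-alt-mono}, applied in the index $p-1$ with $\sigma = I$, $b = \bm\psi$ and $A$ discarded, since $\|A\bm\eta\|^2_{HS(p-1)}\ge 0$. The second term is a plain product term. It is handled by Cauchy--Schwarz, Theorem \ref{product-Sp}, and Young's inequality.

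\textbf{Term 1.} Theorem \ref{L2-alt-mono} holds componentwise in $\Sc$ and extends by density, so it gives
\[
\inpr[p-1]{\bm\eta}{\bigtriangleup\bm\eta+2\bm\psi.\nabla\bm\eta}\le C\|\bm\eta\|_{p-1}^2 .
\]
The constant depends only on sup-norms of derivatives of $\bm\psi$ up to order $2(p-1)$.

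The main obstacle is exactly this dependence. Theorem \ref{L2-alt-mono} needs $C^{2(p-1)}_b$ bounds on the drift, but $\bm\psi\in B_p(R)$ only gives, via the embedding \cite[Theorem 4.1]{MR2373102}, bounded derivatives of order up to roughly $p-\frac d4-1$. I would therefore not invoke Theorem \ref{L2-alt-mono} as a black box. Instead I would redo its proof for the drift term directly.

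\begin{enumerate}[label=(\roman*)]
\item Expand $\inpr[p-1]{\eta_i}{\psi_j\partial_j\eta_i}$ as a sum over $|\alpha|+|\beta|\le 2(p-1)$ of $\int x^\alpha\partial^\beta\eta_i\, x^\alpha\partial^\beta(\psi_j\partial_j\eta_i)\,dx$.
\item Apply Leibniz to $\partial^\beta(\psi_j\partial_j\eta_i)$. The top-order piece $\int (x^\alpha\partial^\beta\eta_i)\,\psi_j\,\partial_j(x^\alpha\partial^\beta\eta_i)$ is integrated by parts, giving $-\tfrac12\int \partial_j\psi_j\,(x^\alpha\partial^\beta\eta_i)^2$. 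This is bounded by $\|\nabla\bm\psi\|_\infty\|\bm\eta\|_{p-1}^2$, and $\|\nabla\bm\psi\|_\infty\le C\|\bm\psi\|_p$ because $p-1>\frac d4$. The commutator $[x^\alpha,\partial_j]$ produces lower-order terms bounded in the same way.
\item The remaining Leibniz terms have the form $x^\alpha(\partial^{\gamma}\psi_j)(\partial^{\beta-\gamma}\partial_j\eta_i)$ with $|\gamma|\ge1$. For these, Cauchy--Schwarz reduces matters to bounding the product $\partial^\gamma\psi_j\cdot\partial^{\beta-\gamma}\partial_j\eta_i$ in the weighted $\mathcal L^2$ norm.
\item The weight $x^\alpha$ is placed on whichever factor has fewer derivatives. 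Then either the $\psi$-factor is put in $\mathcal L^\infty$ (low $|\gamma|$), or the $\eta$-factor is put in $\mathcal L^\infty$ (high $|\gamma|$) and the $\psi$-factor in $\mathcal L^2$, controlled by $\|\bm\psi\|_p$. This is the Moser/Kato--Ponce bookkeeping behind Theorem \ref{product-Sp}, and it uses $p-1>\frac{d+1}{2}$.
\end{enumerate}

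The outcome is the bound $C(R,p,d)\|\bm\eta\|_{p-1}^2$. Alternatively, if the proof of Theorem \ref{product-Sp} in \cite{sp-algebra} supplies a commutator estimate of the form $\|[\,\partial^\beta x^\alpha,\psi]\,\partial_j\eta\|_0\le C\|\psi\|_p\|\eta\|_{p-1}$, I would cite that directly.

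\textbf{Term 2.} By Cauchy--Schwarz and Theorem \ref{product-Sp} in index $p-1$ (valid since $p-1>\frac{d+1}{2}$ is the standing assumption of the Remark),
\[
2\|\bm\eta\|_{p-1}\,\|(\bm\psi-\bm\xi).\nabla\bm\psi\|_{p-1}\le 2C\|\bm\eta\|_{p-1}\|\bm\psi-\bm\xi\|_{p-1}\|\nabla\bm\psi\|_{p-1}\le 2CR\,\|\bm\eta\|_{p-1}\|\bm\psi-\bm\xi\|_{p-1}.
\]
Young's inequality $2ab\le C^2R^2a^2+b^2$ then yields $C^2R^2\|\bm\eta\|_{p-1}^2+\|\bm\psi-\bm\xi\|_{p-1}^2$. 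This explains why the coefficient of $\|\bm\psi-\bm\xi\|_{p-1}^2$ is exactly $1$. Adding Terms 1 and 2 gives \eqref{Mon-inq2-eqns} with $C(R,p,d)$ equal to the Term 1 constant plus $C^2R^2$.

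Two remarks on hypotheses and approximation. The lemma is stated for $p>\frac{d+1}{2}$, but Term 2 needs the product estimate in index $p-1$, so I would verify the estimate for $p-1>\frac{d+1}{2}$, as in the Remark. All computations would be done first for $\bm\phi,\bm\psi,\bm\xi\in\Scsum$, so that the integrations by parts are legitimate, and then extended by density in $\Scsum_p$ using continuity of both sides.
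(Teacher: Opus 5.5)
Your proposal is correct and follows essentially the paper's own argument: the same splitting $\bm\psi.\nabla\bm\phi-\bm\xi.\nabla\bm\psi=\bm\psi.\nabla\bm\eta+(\bm\psi-\bm\xi).\nabla\bm\psi$. The Laplacian is handled by the constant-coefficient monotonicity inequality. The drift term $\inpr[p-1]{\bm\eta}{\bm\psi.\nabla\bm\eta}$ is handled by redoing the integration-by-parts computation from the proof of the monotonicity inequality, which is exactly what the paper cites from \cite{alt-mono}. The cross term is handled by Cauchy--Schwarz, Theorem \ref{product-Sp} and Young's inequality, with coefficient $1$ on $\|\bm\psi-\bm\xi\|_{p-1}^2$.

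Your extra care addresses two points the paper passes over silently. First, a black-box use of Theorem \ref{L2-alt-mono} would make the constant depend on sup-norms of derivatives of $\bm\psi$ up to order $2(p-1)$ rather than on $R$ alone, and your commutator bookkeeping avoids this. Second, the product estimate at index $p-1$ really requires $p-1>\frac{d+1}{2}$.
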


\begin{proof} Since $\inpr[p]{\vecu}{\vecv} = \sum_{j = 1}^d \inpr[p]{u_j}{v_j}$, we may apply the inequality \eqref{Mon-inq2-eqns} for $d = 1$ to each of terms in the sum to get the inequality in the general case. Hence, it suffices to establish the inequality for $d = 1$. Now,
\begin{align}\label{moneqn2-cal1}
\begin{split}
& \inpr[p-1]{\phi-\psi}{\partial^2(\phi-\psi)} + 2 \inpr[p-1]{\phi-\psi}{\psi\, \partial\phi- \xi\, \partial\psi}\\
& = \inpr[p-1]{\phi-\psi}{\partial^2(\phi-\psi)} + 2 \inpr[p-1]{\phi-\psi}{\psi\, \partial (\phi-\psi)} + 2 \inpr[p-1]{\phi-\psi}{(\psi-\xi)\, \partial \psi}.
\end{split}
\end{align}
The first term on the RHS of \eqref{moneqn2-cal1}, is handled by a constant-coefficient monotonicity inequality, (see \cite[Theorem 3.1]{MR3331916}), i.e.
\begin{align}\label{moneqn2-cal2}
\begin{split}
\inpr[p-1]{\phi-\psi}{\partial^2(\phi-\psi)} & \leq 2\inpr[p-1]{\phi-\psi}{\frac{1}{2}\partial^2(\phi-\psi)} + \|\partial(\phi-\psi)\|_{p-1}^2 \\
& \leq C(p)\, \| \phi-\psi\|_{p-1}^2.
\end{split}
\end{align}
For the second term on the RHS of \eqref{moneqn2-cal1}, we use computations involving terms of the form $\inpr[p-1]{\phi}{f\partial \phi}$ from \cite[proof of Theorem 2.1]{alt-mono} and obtain
\begin{equation}\label{moneqn2-cal3}
\left|  2 \inpr[p-1]{\phi-\psi}{\psi\, \partial (\phi-\psi)}\right| \leq C(p,R) \| \phi-\psi\|_{p-1}^2.
\end{equation}
For the third term on the RHS of \eqref{moneqn2-cal1},
\begin{align}\label{moneqn2-cal4}
\begin{split}
& \left| 2 \inpr[p-1]{\phi-\psi}{(\psi-\xi)\, \partial \psi}\right| \\
& \leq 2\, \|\phi-\psi\|_{p-1} \|(\psi-\xi)\, \partial \psi\|_{p-1}\\
& \leq 2\, C(p)\, \|\phi-\psi\|_{p-1} \|\psi-\xi\|_{p-1} \|\partial \psi\|_{p-1},\ \text{[by Theorem \ref{product-Sp}]}\\
& \leq C^2(p)\, \|\partial \psi\|_{p-1}^2 \|\phi-\psi\|^2_{p-1} + \|\psi-\xi\|_{p-1}^2 \\
& \leq C(p,R)\, \|\phi-\psi\|^2_{p-1} + \|\psi-\xi\|_{p-1}^2,\, \text{[$\because$ $\|\partial\psi\|_{p-1}\leq C_p\|\psi\|_p\leq C(p,R)$, as $\psi\in B_p(R)$]}.
\end{split}
\end{align}
Therefore, putting \eqref{moneqn2-cal4}, \eqref{moneqn2-cal3},  \eqref{moneqn2-cal2} into  \eqref{moneqn2-cal1}, we obtain our result.

\end{proof}

\begin{theorem}[Existence of a solution]\label{existence-Burger}
For any $p > \frac{d+1}2 + 1$, the sequence $\{\vecu_n\}_n$ converges uniformly on $[0, \tau^{p, \lambda}]$ in $\Scsum_{p-1}$ to a limit $\vecu^{p, \lambda}$ and $(\vecu^{p, \lambda}, \tau^{p, \lambda})$ is an $\Scsum_{p-1}$ valued local solution to \eqref{Burger}.
\end{theorem}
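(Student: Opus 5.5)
The plan is to show that $\{\vecu_n\}$ is Cauchy in $C([0,\tau^{p,\lambda}];\Scsum_{p-1})$ with a factorially decaying error, and then pass to the limit in \eqref{iteration-integral-form}.

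First I record uniform bounds. For $t\le \tau^{p,\lambda}\le\tau_n^{p,\lambda}$, the stopping definition \eqref{stoptime} with $|\alpha|\le 2p$ gives $\|\partial^\alpha\vecu_n(t)\|_{p-K(|\alpha|)}\le \|\partial^\alpha\vecu_0\|_{p-K(|\alpha|)}+\lambda$ for every $n$. Via Proposition \ref{norm-equivalence} and the definition of $\|\cdot\|_p$ through weighted derivatives, this controls $\|\vecu_n(t)\|_{p}$ by some $R=R(\lambda,\vecu_0,p)$ uniformly in $n$ and in $t\le\tau^{p,\lambda}$. This is the point of the stopping times: the first-order derivative terms are controlled in $\Sc_{p-1}$, and collecting the $x^\beta\partial^\gamma$ terms with $|\beta|+|\gamma|\le 2p$ bounds the $\|\cdot\|_p$ norm. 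So all $\vecu_n(t)$ lie in $B_p(R)$.

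Next comes the difference estimate. Put $\mathbf w_n:=\vecu_{n+1}-\vecu_n$. Subtracting the integral equations \eqref{iteration-integral-form} for indices $n+1$ and $n$ gives
\[\mathbf w_n(t)=\int_0^t\Big[\tfrac12\bigtriangleup \mathbf w_n(s)+\vecu_n(s)\cdot\nabla\vecu_{n+1}(s)-\vecu_{n-1}(s)\cdot\nabla\vecu_n(s)\Big]ds.\]
Since all terms are $\Scsum$-valued and continuous, the map $t\mapsto\|\mathbf w_n(t)\|_{p-1}^2$ is differentiable. Its derivative equals $\inpr[p-1]{\mathbf w_n}{\bigtriangleup\mathbf w_n}+2\inpr[p-1]{\mathbf w_n}{\vecu_n\cdot\nabla\vecu_{n+1}-\vecu_{n-1}\cdot\nabla\vecu_n}$. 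Applying Lemma \ref{Mon-inq2} with $\bm\phi=\vecu_{n+1}$, $\bm\psi=\vecu_n$, $\bm\xi=\vecu_{n-1}$, all in $B_p(R)$, yields
\[\|\mathbf w_n(t)\|_{p-1}^2\le C\int_0^t\|\mathbf w_n(s)\|^2_{p-1}ds+\int_0^t\|\mathbf w_{n-1}(s)\|^2_{p-1}ds,\quad t\le\tau^{p,\lambda}.\]
By Gronwall, $\|\mathbf w_n(t)\|_{p-1}^2\le e^{CT}\int_0^t\|\mathbf w_{n-1}(s)\|_{p-1}^2ds$. Iterating gives $\sup_{t\le\tau^{p,\lambda}}\|\mathbf w_n(t)\|^2_{p-1}\le M\,(e^{CT}T)^n/n!$, with $M=\sup\|\mathbf w_0\|^2_{p-1}\le 4R^2$. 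This bound is summable after taking square roots, so $\vecu_n\to\vecu^{p,\lambda}$ uniformly in $\Scsum_{p-1}$, and the limit is continuous.

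Finally I pass to the limit. Uniform bounds in $\Scsum_p$ together with convergence in $\Scsum_{p-1}$ show that $\vecu^{p,\lambda}(t)\in B_p(R)$ by weak compactness. The main obstacle is that the integrand requires $\bigtriangleup\vecu_n\to\bigtriangleup\vecu^{p,\lambda}$ and $\vecu_{n-1}\cdot\nabla\vecu_n\to\vecu\cdot\nabla\vecu$ in a space where the equation holds. Convergence in $\Scsum_{p-1}$ only gives convergence of $\bigtriangleup\vecu_n$ in $\Scsum_{p-2}$. Since $p-1>\frac{d+1}{2}$, Theorem \ref{product-Sp} gives
\[\|\vecu_{n-1}\cdot\nabla\vecu_n-\vecu\cdot\nabla\vecu\|_{p-2}\lesssim\|\vecu_{n-1}-\vecu\|_{p-2}\|\nabla\vecu_n\|_{p-2}+\|\vecu\|_{p-2}\|\nabla(\vecu_n-\vecu)\|_{p-2}.\]
Here I would need $p-2>\frac{d+1}{2}$; if that fails, I would instead pair against test functions in $\Sc$ (duality) or interpolate between the $\Scsum_p$ bound and $\Scsum_{p-1}$ convergence to get convergence in $\Scsum_{p-1/2}$. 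With that, the integral equation holds in a weaker space with bounded integrands. Since the integrand of the limit equation is bounded in $\Scsum_{p-2}$ and, using the $\Scsum_p$ bound, is Bochner integrable in the required space, I get the identity \eqref{Burger} for $\vecu^{p,\lambda}$ on $[0,\tau^{p,\lambda})$. By the definition this makes $(\vecu^{p,\lambda},\tau^{p,\lambda})$ an $\Scsum_{p-1}$ valued local solution, after adjusting the index in the sense of that definition.
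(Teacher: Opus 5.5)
Your overall route matches the paper's. You take control uniform in $n$ from the stopping times, apply Lemma~\ref{Mon-inq2} with $\bm\phi=\vecu_{n+1}$, $\bm\psi=\vecu_n$, $\bm\xi=\vecu_{n-1}$, then use Gronwall, iterate to a $t^n/n!$ bound, get the uniform Cauchy property in $\Scsum_{p-1}$, and pass to the limit in \eqref{iteration-integral-form}. The Gronwall/iteration/summation part is fine; also, $M\le\lambda^2$ follows directly from the $\alpha=0$ stopping condition.

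\textbf{The gap.} Your first step is false as justified. The stopped quantities $\|\partial^\alpha\vecu_n(t)-\partial^\alpha\vecu_0\|_{p-K(|\alpha|)}$, $|\alpha|\le 2p$, do not control $\|\vecu_n(t)\|_p$. The norm $\|u\|_p$ contains $\|x^\beta u\|_{\mathcal L^2}$ with $|\beta|=2p$, and no stopped quantity sees this term. Indeed, $\|u\|_{p-1}$ carries weights of order at most $2p-2$, and every other stopped quantity puts at least one derivative on $u$. Concretely, for fixed $0\ne\varphi\in\Sc$, the functions $\epsilon^{2p-2+d/2}\varphi(\epsilon\,\cdot)$ with $\epsilon\to0$ keep all stopped quantities bounded, while their $\|\cdot\|_p$-norms grow like $\epsilon^{-2}$.

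So you do not get $\vecu_n(t)\in B_p(R)$ uniformly in $n$. Both places where you use it need another justification: the constant in Lemma~\ref{Mon-inq2}, and the weak-compactness/interpolation argument at the end. The paper's proof claims no $\Scsum_p$ bound. It invokes Lemma~\ref{Mon-inq2} with the constant controlled by the derivative bounds encoded in the stopping times:
\begin{itemize}
\item the third term in that lemma's proof only needs $\|\nabla\bm\psi\|_{p-1}$, and $\|\partial_j\vecu_n(s)\|_{p-1}\le\|\partial_j\vecu_0\|_{p-1}+\lambda$ by the $|\alpha|=1$ condition;
\item the second term needs bounds on derivatives of $\bm\psi=\vecu_n(s)$, which is what stopping over all $|\alpha|\le 2p$ is meant to supply.
\end{itemize}

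\textbf{The limit passage.} The same stopping bounds give what you need, without an $\Scsum_p$ bound and without requiring $p-2>\frac{d+1}{2}$. Since $K(1)=K(2)=1$, we have $\sup_n\sup_{t\le\tau^{p,\lambda}}\|\partial^\alpha\vecu_n(t)\|_{p-1}<\infty$ for $|\alpha|\le 2$. Combined with $\vecu_n(t)\to\vecu(t):=\vecu^{p,\lambda}(t)$ in $\Scsum_{p-1}$, this yields $\nabla\vecu_n(t)\rightharpoonup\nabla\vecu(t)$ and $\bigtriangleup\vecu_n(t)\rightharpoonup\bigtriangleup\vecu(t)$ weakly in $\Scsum_{p-1}$.

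Now write $\vecu_n\cdot\nabla\vecu_{n+1}-\vecu\cdot\nabla\vecu=(\vecu_n-\vecu)\cdot\nabla\vecu_{n+1}+\vecu\cdot\nabla(\vecu_{n+1}-\vecu)$ and use Theorem~\ref{product-Sp} at index $p-1$. The first piece tends to $0$ in norm and the second weakly. Hence the integrands of \eqref{iteration-integral-form} converge weakly in $\Scsum_{p-1}$ with a uniform bound. Testing against elements of $\Scsum_{p-1}$, applying dominated convergence, and using Pettis' theorem for Bochner integrability gives \eqref{Burger} for $\vecu^{p,\lambda}$ as an identity in $\Scsum_{p-1}$, hence also in $\Scsum_{p-2}$. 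This fills in the paper's one-line ``term-by-term limit'', whose subtlety you correctly spotted.
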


\begin{proof}
In this proof, we use a two-step monotonicity argument, similar to \cite[Theorem 3.3]{brajeev-arxiv}.

Fix $t \in [0, \tau^{p, \lambda}]$. Then, since $\tau^{p, \lambda} \leq \tau_n^{p, \lambda}, \forall n$, we have $\|\vecu_n(t)\|_{p - 1} \leq \|\vecu_0\|_{p - 1} + \|\vecu_n(t) - \vecu_0\|_{p - 1} \leq \|\vecu_0\|_{p - 1} + \lambda$ and hence,
\begin{align*}
&\|\vecu_{n+1}(t) - \vecu_n(t)\|_{p-1}^2\\
&= \int_0^t \inpr[p-1]{\vecu_{n+1}(s) - \vecu_n(s)}{\bigtriangleup(\vecu_{n+1}(s) - \vecu_n(s))} \, ds\\
& + 2\int_0^t \inpr[p-1]{\vecu_{n+1}(s) - \vecu_n(s)}{\vecu_{n}(s). \nabla \vecu_{n+1}(s) - \vecu_{n-1}(s). \nabla \vecu_{n}(s)} \, ds \\
&\text{(using the bounds on the derivatives of $\vecu_n(s)$ during $[0, \tau^{p, \lambda}_n]$}\\
&\text{and the resulting two-step Monotonicity inequality from Lemma \ref{Mon-inq2})},\\
& \leq C_{\lambda, p, \vecu_0} \int_0^t \|\vecu_{n+1}(s) - \vecu_n(s)\|_{p-1}^2\, ds + \int_0^t \|\vecu_n(s) - \vecu_{n-1}(s)\|_{p-1}^2\, ds\\
& \leq C(\lambda,p,\vecu_0,t) \int_0^t \|\vecu_n(s) - \vecu_{n-1}(s)\|_{p-1}^2\, ds\ \ \text{[by Gronwall's lemma]} \\
& = C(\lambda,p,\vecu_0,t) \int_0^t \|\vecu_n(t_1) - \vecu_{n-1}(t_1)\|_{p-1}^2\, dt_1\\
& \leq C^2(\lambda,p,\vecu_0,t) \int_0^t \int_0^{t_1}\|\vecu_{n-1}(t_2) - \vecu_{n-2}(t_2)\|_{p-1}^2\, dt_2dt_1\\
& \leq C^n(\lambda,p,\vecu_0,t) \int_0^t \int_0^{t_1} \int_0^{t_2}\cdots\int_0^{t_{n-1}}\|\vecu_1(t_n)-\vecu_0(t_n)\|_{p-1}^2\, dt_n\cdots dt_3dt_2dt_1\\
& \leq \lambda^2C^n(\lambda,p,\vecu_0,t) \frac{t^n}{n!}\ \ \text{[from \eqref{stoptime}]}.
\end{align*}
Now observe that,
 \[\vecu_{n+1}(t) = \vecu_0 + \sum_{k=0}^n\left(\vecu_{k+1}(t)-\vecu_k(t) \right).\]
Then,
\begin{align*}
\left\| \vecu_m(t)-\vecu_n(t)\right\|_{p-1} & = \left\| \sum_{k=n}^{m-1}(\vecu_{k+1}(t)-\vecu_k(t))\right\|_{p-1} \\
& \leq \sum_{k=n}^{m-1} \left\| \vecu_{k+1}(t)-\vecu_k(t)\right\|_{p-1} \\
& =  \sum_{k=n}^{m-1} \left\| \vecu_{k+1}(t)-\vecu_k(t)\right\|_{p-1}\, k\cdot \frac{1}{k}\\
& \leq \left( \sum_{k=n}^{m-1} \left\| \vecu_{k+1}(t)-\vecu_k(t)\right\|^2_{p-1}\, k^2\right)^{1/2} \left(  \sum_{k=n}^{m-1}\frac{1}{k^2}\right)^{1/2} \\
& \leq \left( \sum_{k=n}^{m-1} \lambda^2C^k(\lambda,p,\vecu_0,t) \frac{t^k}{k!} \, k^2\right)^{1/2} \left(  \sum_{k=1}^{\infty}\frac{1}{k^2}\right)^{1/2}.
\end{align*}
Consequently
\[\sup_{t\in[0, \tau^{p, \lambda}]} \left\| \vecu_m(t)-\vecu_n(t)\right\|_{p-1}\leq \left( \sum_{k=n}^{m-1} \lambda^2C^k(\lambda,p,\vecu_0,\tau^{p,\lambda}) \frac{(\tau^{p,\lambda})^k}{k!} \, k^2\right)^{1/2} \left(  \sum_{k=1}^{\infty}\frac{1}{k^2}\right)^{1/2}.\]
Now $\sum_{k=0}^{\infty} \lambda^2C^k(\lambda,p,\vecu_0,\tau^{p,\lambda}) \frac{(\tau^{p,\lambda})^k}{k!} \, k^2$, converges and hence the tail sum 
\[\sum_{k=n}^{m-1} \lambda^2C^k(\lambda,p,\vecu_0,\tau^{p,\lambda}) \frac{(\tau^{p,\lambda})^k}{k!} \, k^2\to 0,\] 
as $m,n\to\infty$. And $\sum_{k=1}^{\infty}\frac{1}{k^2}$ converges. Therefore $\{\vecu_m(t):t\in[0, \tau^{p, \lambda}]\}_m$ is Cauchy in $\Scsum_{p-1}$ and hence converges uniformly on $[0, \tau^{p, \lambda}]$ in $\Scsum_{p-1}$. We denote this limit by $\vecu^{p, \lambda}$. 

Taking term-by-term limit in \eqref{iteration-integral-form}, we observe that $\vecu^{p, \lambda}$ solves \eqref{Burger} on $[0, \tau^{p, \lambda}]$.
\end{proof}

We now discuss a monotonicity inequality required to obtain the uniqueness of a solution to \eqref{Burger}.

\begin{lemma}\label{nonlinear-mono}
Fix $R > 0$. For any $p>\frac{d+1}{2}$ and $\bm\phi,\bm\psi\in B_p(R)$, 
\begin{equation}
\inpr[p-1]{\bm\phi-\bm\psi}{\bigtriangleup(\bm\phi-\bm\psi)} + 2\inpr[p-1]{\bm\phi-\bm\psi}{\bm\phi. \nabla\bm\phi- \bm\psi. \nabla\bm\psi} \leq C(R,p,d)\, \|\bm\phi-\bm\psi\|_{p-1}^2,
\end{equation}
where $C(R,p,d)$ is a positive constant.
\end{lemma}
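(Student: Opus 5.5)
The plan is to reduce Lemma \ref{nonlinear-mono} to the ingredients already used in the proof of Lemma \ref{Mon-inq2}. I would take the same splitting but with $\bm\xi$ replaced by $\bm\psi$ on the other side, i.e.\ write
\[
\bm\phi.\nabla\bm\phi-\bm\psi.\nabla\bm\psi=\bm\phi.\nabla(\bm\phi-\bm\psi)+(\bm\phi-\bm\psi).\nabla\bm\psi .
\]
As in Lemma \ref{Mon-inq2}, the inner product on $\Scsum_{p-1}$ is a sum of componentwise inner products. However, the vector field $\bm\phi$ couples the components only through its role as a coefficient. So I would expand the quantity into the scalar pieces $\inpr[p-1]{\phi_i-\psi_i}{\phi_j\,\partial_j(\phi_i-\psi_i)}$ and $\inpr[p-1]{\phi_i-\psi_i}{(\phi_j-\psi_j)\,\partial_j\psi_i}$. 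The Laplacian term is treated one coordinate direction at a time.

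First I would estimate the Laplacian term exactly as in \eqref{moneqn2-cal2}, via the constant coefficient monotonicity inequality of \cite[Theorem 3.1]{MR3331916}. It gives
\[
\inpr[p-1]{\bm\phi-\bm\psi}{\bigtriangleup(\bm\phi-\bm\psi)}\le C(p,d)\|\bm\phi-\bm\psi\|_{p-1}^2 .
\]
Second, the transport-type term $2\inpr[p-1]{\phi_i-\psi_i}{\phi_j\partial_j(\phi_i-\psi_i)}$ would be handled like \eqref{moneqn2-cal3}, using the computation for $\inpr[p-1]{\eta}{f\partial_j\eta}$ from \cite[proof of Theorem 2.1]{alt-mono}. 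One integrates by parts the top-order contribution, $\int f\,(x^\alpha\partial^\beta\eta)\,\partial_j(x^\alpha\partial^\beta\eta)=-\tfrac12\int(\partial_jf)(x^\alpha\partial^\beta\eta)^2$. The remaining commutator terms contain at most $|\beta|$ derivatives of $\eta$ multiplied by derivatives of $f$ and polynomial weights. This bounds the term by a constant times $\|\eta\|_{p-1}^2$, the constant depending on $\sup|\partial^\gamma f|$ for $|\gamma|\le 2(p-1)$. Here $f=\phi_j$, and the bound on $\sup|\partial^\gamma f|$ must be uniform over $\bm\phi\in B_p(R)$ in terms of $R$. That is the point needing care: for $p>\frac{d+1}{2}$ one gets $L^\infty$ control of $\phi_j$ and its first derivative from \cite[Theorem 4.1]{MR2373102}, but not of derivatives up to order $2p-2$.

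I therefore expect this commutator estimate to be the main obstacle. I would avoid sup norms of high derivatives entirely. For each commutator piece, of the form $\int (x^\alpha\partial^\beta\eta)\,x^{\alpha}\partial^{\gamma}f\,\partial^{\beta-\gamma+e_j}\eta$ with $|\gamma|\ge1$, I would apply Cauchy--Schwarz. This bounds it by $\|\eta\|_{p-1}$ times $\|x^\alpha\partial^\gamma f\,\partial^{\beta-\gamma+e_j}\eta\|_{0}$. I would then redistribute the weights and use Theorem \ref{product-Sp} (or its proof in \cite{sp-algebra}) to bound this factor by $C\|f\|_p\|\eta\|_{p-1}$, noting that the derivative count on $\eta$ is at most $|\beta|$ and that $f\in\Sc_p$ absorbs one extra derivative. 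This gives $C(R,p,d)\|\bm\phi-\bm\psi\|_{p-1}^2$.

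Finally, the zeroth-order term $2\inpr[p-1]{\phi_i-\psi_i}{(\phi_j-\psi_j)\partial_j\psi_i}$ is handled by Cauchy--Schwarz and Theorem \ref{product-Sp} in $\Sc_{p-1}$, exactly as in \eqref{moneqn2-cal4}. The difference is that now both factors are $\bm\phi-\bm\psi$, so no Young splitting is needed. The term is bounded by $2C\|\partial_j\psi_i\|_{p-1}\|\bm\phi-\bm\psi\|_{p-1}^2\le C(R,p,d)\|\bm\phi-\bm\psi\|_{p-1}^2$, since $\|\partial_j\psi_i\|_{p-1}\le C\|\psi\|_p\le CR$. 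Summing the three estimates over $i,j$ gives the claim.
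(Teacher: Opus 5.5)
Your argument is correct, and its overall structure matches the paper's. You use the same splitting $\bm\phi.\nabla\bm\phi-\bm\psi.\nabla\bm\psi=\bm\phi.\nabla(\bm\phi-\bm\psi)+(\bm\phi-\bm\psi).\nabla\bm\psi$, the same Laplacian bound \eqref{moneqn2-cal2}, and the same Cauchy--Schwarz plus Theorem \ref{product-Sp} treatment of the zeroth-order sum.

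The genuine difference is the transport term $\inpr[p-1]{\eta}{\phi_j\,\partial_j\eta}$, with $\eta=\phi_i-\psi_i$. The paper only points to \eqref{moneqn2-cal3}, i.e.\ to the computation in \cite{alt-mono}. That computation's constant is governed by $\sup|\partial^\gamma\phi_j|$ for $|\gamma|\le 2(p-1)$ (cf.\ Theorem \ref{L2-alt-mono}). As you note, these sup norms are not controlled by $\|\bm\phi\|_p\le R$ in general: the Sobolev count $2p-(2p-2)=2>\frac d2$ needs $d\le 3$. Read literally, the paper's route therefore yields a constant depending on higher derivative bounds; its discussion before \eqref{stoptime} tries to supply these bounds through stopping times. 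Your Cauchy--Schwarz treatment instead gives a constant depending only on $R,p,d$, which is what the lemma asserts.

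The factor bound you need, however, is not Theorem \ref{product-Sp}. That theorem is a same-index estimate and is useless once $\partial^\gamma\phi_j$ has only two derivatives left. State it instead as a mixed-regularity estimate. Put $g=\partial^\gamma\phi_j$ and $h=x^\alpha\partial^{\beta-\gamma+e_j}\eta$. Then $\|g\|_{H^m}\le C\|\phi_j\|_p$ with $m=2p-|\gamma|$, and $\|h\|_{H^k}\le C\|\eta\|_{p-1}$ with $k=2p-3-|\alpha|-|\beta|+|\gamma|\ge 0$. Since $m+k\ge 2p-1>d$, H\"older's inequality with Sobolev embeddings gives $\|gh\|_0\le C\|g\|_{H^m}\|h\|_{H^k}$.

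Two smaller points. First, record the term $-\alpha_j\int\phi_j\,x^{2\alpha-e_j}(\partial^\beta\eta)^2$ that appears when $\partial_j$ is moved past $x^\alpha$; it is bounded by $\|\phi_j\|_\infty\|\eta\|_{p-1}^2$. Second, both you and the paper apply Theorem \ref{product-Sp} in $\Sc_{p-1}$, which as stated requires $p-1>\frac{d+1}{2}$. This holds wherever the lemma is used, but not under its stated hypothesis $p>\frac{d+1}{2}$.
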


\begin{proof}
The proof is similar to Lemma \ref{Mon-inq2}. The term $\inpr[p-1]{\bm\phi-\bm\psi}{\bigtriangleup(\bm\phi-\bm\psi)}$ is estimated using \eqref{moneqn2-cal2}. For $2 \inpr[p-1]{\bm\phi-\bm\psi}{\bm\phi. \nabla\bm\phi- \bm\psi. \nabla\psi}$, note that
\begin{align*}
&2 \inpr[p-1]{\bm\phi-\bm\psi}{\bm\phi. \nabla\phi- \bm\psi\, \nabla\bm\psi}\\
= &2 \inpr[p-1]{\bm\phi-\bm\psi}{\bm\phi. \nabla (\bm\phi-\bm\psi)} + 2 \inpr[p-1]{\bm\phi-\bm\psi}{(\bm\phi-\bm\psi). \nabla \bm\psi}\\
= &2 \sum_{j=1}^d \inpr[p-1]{\phi_j-\psi_j}{(\bm\phi. \nabla (\bm\phi-\bm\psi))_j} + 2 \sum_{j=1}^d \inpr[p-1]{\phi_j-\psi_j}{((\bm\phi-\bm\psi). \nabla \bm\psi)_j},
\end{align*}
where $(\bm\phi. \nabla (\bm\phi-\bm\psi))_j$ denotes the $j$-th component of $\bm\phi. \nabla (\bm\phi-\bm\psi)$. 
The terms in the first sum on the right hand side are estimated similar to \eqref{moneqn2-cal3}, and the terms in the second sum are estimated by the Cauchy-Schwarz inequality, Proposition \ref{product-Sp} and the boundedness of $\partial_j : \Sc_p \to \Sc_{p - 1}, j = 1, 2, \cdots, d$. This completes the proof.
\end{proof}

Using Lemma \ref{nonlinear-mono}, we have a consistency statement involving the solutions of \eqref{Burger}.
\begin{lemma}\label{consistency}
For integers $p, q > \frac{d+1}2 + 1$ and for any $\lambda, \eta > 0$, we have
$\vecu^{p, \lambda} = \vecu^{q, \eta}$ on $[0, \tau^{p, \lambda} \wedge \tau^{q, \eta}]$.
\end{lemma}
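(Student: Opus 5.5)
Set $\sigma := \tau^{p, \lambda} \wedge \tau^{q, \eta}$ and $r := \min(p, q)$. Both $\vecu^{p, \lambda}$ and $\vecu^{q, \eta}$ are obtained as limits of the \emph{same} iteration sequence $\{\vecu_n\}_n$ from \eqref{iteration-integral-form}. The iterates do not depend on $p$ or $\lambda$; only the stopping times do. Theorem \ref{existence-Burger} gives $\vecu_n \to \vecu^{p, \lambda}$ uniformly on $[0, \tau^{p, \lambda}]$ in $\Scsum_{p-1}$, and $\vecu_n \to \vecu^{q, \eta}$ uniformly on $[0, \tau^{q, \eta}]$ in $\Scsum_{q-1}$.

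The most direct route is therefore:
\begin{enumerate}
\item Since $\|\cdot\|_{r-1}$ is dominated by both $\|\cdot\|_{p-1}$ and $\|\cdot\|_{q-1}$ (up to constants, via Proposition \ref{norm-equivalence}), both convergences hold in $\Scsum_{r-1}$ on $[0, \sigma]$.
\item Limits in a Hilbert space are unique, so $\vecu^{p, \lambda}(t) = \vecu^{q, \eta}(t)$ for every $t \in [0, \sigma]$.
\end{enumerate}
This argument does not even need Lemma \ref{nonlinear-mono}.

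Since the paper announces that Lemma \ref{nonlinear-mono} is to be used, I would also present, or at least remark on, the uniqueness argument, which is robust if one prefers not to rely on the identity of the approximating sequences. Let $\bm w := \vecu^{p, \lambda} - \vecu^{q, \eta}$ on $[0, \sigma]$. Both functions satisfy \eqref{Burger} in $\Scsum_{r-1}$, hence $\bm w$ satisfies
\[
\bm w(t) = \int_0^t \Big( \tfrac12 \bigtriangleup \bm w(s) + \vecu^{p, \lambda}(s) \cdot \nabla \vecu^{p, \lambda}(s) - \vecu^{q, \eta}(s) \cdot \nabla \vecu^{q, \eta}(s) \Big)\, ds.
\]
One then computes $\|\bm w(t)\|_{r-2}^2$ by the chain rule for the squared Hilbert norm, working in the index $r-1$ lowered by one so that the integrand lives in the dual pairing. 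Lemma \ref{nonlinear-mono} is applied with index $r-1$ and the radius
\[
R = \sup_{s \le \sigma} \max\big(\|\vecu^{p, \lambda}(s)\|_{r-1}, \|\vecu^{q, \eta}(s)\|_{r-1}\big).
\]
This radius is finite because of the stopping-time bounds $\|\vecu_n(t) - \vecu_0\|_{p-1} \le \lambda$, which pass to the limit. The lemma gives $\|\bm w(t)\|^2 \le C \int_0^t \|\bm w(s)\|^2\, ds$, and Gronwall's inequality yields $\bm w \equiv 0$.

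The main obstacle in the second route is the regularity bookkeeping. One must justify the energy identity for $\|\bm w(t)\|^2$: the drift lies only in $\Scsum_{r-2}$ while $\bm w$ lies in $\Scsum_{r-1}$. One must also check that the index used in Lemma \ref{nonlinear-mono} still exceeds $\frac{d+1}{2}$. The hypothesis $p, q > \frac{d+1}{2} + 1$ is exactly what gives $r - 1 > \frac{d+1}{2}$. I would handle the identity either by approximating with the smooth iterates $\vecu_n \in \Scsum$, for which the computation is classical, and passing to the limit, or simply by falling back on the first, limit-identification argument, which avoids the issue entirely.
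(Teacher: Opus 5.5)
Your first argument is correct, and it differs from the paper's proof.

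\textbf{What the paper does.} The paper never uses that $\vecu^{p,\lambda}$ and $\vecu^{q,\eta}$ are limits of the same iterates. It takes $p<q$, writes \eqref{Burger} for both functions, and expands $\|\vecu^{p,\lambda}(t)-\vecu^{q,\eta}(t)\|_{p-1}^2$ by the energy identity. It then closes with Lemma \ref{nonlinear-mono} and Gronwall's inequality, so consistency is deduced from uniqueness for the nonlinear equation. That is essentially your second route, except that you run it one index lower, in $\|\cdot\|_{r-2}$.

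\textbf{What your first route buys.} The iterates $\vecu_n$ do not depend on $(p,\lambda)$, so the lemma is built into the construction. You therefore avoid two pieces of bookkeeping that the paper leaves implicit:
\begin{itemize}
\item justifying the energy identity when the drift lies only in a lower-index space;
\item obtaining the control of $\vecu^{p,\lambda}$ needed to invoke Lemma \ref{nonlinear-mono} at index $p$, namely bounds in $\Scsum_p$, which go beyond the $\Scsum_{p-1}$ convergence stated in Theorem \ref{existence-Burger}.
\end{itemize}
A small simplification: $\|\cdot\|_{r-1}\le\|\cdot\|_{p-1}$ follows directly from the definition of the alternate norms, with no constant, so Proposition \ref{norm-equivalence} is not needed.

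\textbf{What the paper's route buys.} The same computation shows that any two solutions of \eqref{Burger} with controlled norms agree on their common interval, however they were constructed. This is why it is reused in Theorem \ref{existence-uniqueness-schwartz-initial} and Lemma \ref{continuity-estimate}. For the consistency statement itself, that generality is not needed.

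\textbf{A caution on your second route.} Lemma \ref{nonlinear-mono} is stated for index $>\frac{d+1}{2}$, but its proof applies Theorem \ref{product-Sp} one index lower. Using the lemma at index $r-1$ therefore requires the product estimate in $\Scsum_{r-2}$, and the hypothesis $r>\frac{d+1}{2}+1$ does not provide that. This is another reason to rest the proof on your first argument.
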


\begin{proof}
Without loss of generality, we take $p < q$. Then, 
\[\vecu^{p, \lambda}(t) = \vecu_0 +  \int_0^t \left[\frac{1}{2}\bigtriangleup \vecu^{p, \lambda}(s) + \vecu^{p, \lambda}(s). \nabla \vecu^{p, \lambda}(s)\right]\, ds,\]
and
\[\vecu^{q, \eta}(t) = \vecu_0 +  \int_0^t \left[\frac{1}{2}\bigtriangleup \vecu^{q, \eta}(s) + \vecu^{q, \eta}(s). \nabla \vecu^{q, \eta}(s)\right]\, ds,\]
hold for $t \in [0, \tau^{p, \lambda} \wedge \tau^{q, \eta}]$, with equality in $\Scsum_{p-1}$. Then,
\begin{align*}
    \| \vecu^{p, \lambda}(t) - \vecu^{q, \eta}(t)\|_{p-1}^2 &= \int_0^t \inpr[p-1]{ \vecu^{p, \lambda}(s) -  \vecu^{q, \eta}(s)}{\bigtriangleup(\vecu^{p, \lambda}(s) - \vecu^{q, \eta}(s))}\, ds\\
    &+2\int_0^t \inpr[p-1]{\vecu^{p, \lambda}(s) - \vecu^{q, \eta}(s)}{ \vecu^{p, \lambda}(s).\nabla \vecu^{p, \lambda}(s) - \vecu^{q, \eta}(s).\nabla \vecu^{q, \eta}(s)}\, ds.
\end{align*}
The required equality follows from the Monotonicity inequality (Lemma \ref{nonlinear-mono}) and Gronwall's inequality.
\end{proof}

By patching up $\Scsum_p$ valued local solutions, we show the existence of an $\Scsum$ valued local solution.
\begin{theorem}\label{defn-by-consistency}
Assume $\vecu_0 \in \Scsum$. Then, there exists an $\Scsum$-valued local solution $(\vecu, \tau)$ for \eqref{Burger}.
\end{theorem}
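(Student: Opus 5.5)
The plan is to define $\vecu$ by patching together the $\Scsum_{p-1}$ valued local solutions $\vecu^{p,\lambda}$ from Theorem \ref{existence-Burger}, using Lemma \ref{consistency} to check that the pieces agree. The natural candidate for the existence time is $\tau^\lambda = \lim_{p\to\infty}\tau^{p,\lambda}$ for a fixed $\lambda>0$, or alternatively $\tau = \lim_{\lambda\to\infty}\tau^\lambda$. By Proposition \ref{local-existence}, both lie in $(0,T]$.

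\textbf{Step 1 (definition).} Fix $\lambda > 0$. For $t < \tau^\lambda$, we have $t < \tau^{p,\lambda}$ for every integer $p > \frac{d+1}{2}+1$, because $\tau^{p,\lambda}$ decreases to $\tau^\lambda$ and so $\tau^{p,\lambda}\geq \tau^\lambda > t$. We therefore set $\vecu(t) := \vecu^{p,\lambda}(t)$ for any such $p$. By Lemma \ref{consistency}, $\vecu^{p,\lambda}=\vecu^{q,\lambda}$ on $[0,\tau^{p,\lambda}\wedge\tau^{q,\lambda}]\supset[0,\tau^\lambda)$, so this is independent of $p$. The lemma also allows different $\lambda,\eta$, so the same argument lets us extend the definition to $[0,\tau)$: for $t<\tau$ pick $\lambda$ with $t<\tau^\lambda$. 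Consistency across different $\lambda$ again follows from Lemma \ref{consistency}.

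\textbf{Step 2 (regularity in every $\Scsum_q$).} Fix $t<\tau^\lambda$ and an index $q$. Since $\vecu(t)=\vecu^{p,\lambda}(t)\in\Scsum_{p-1}$ for every $p$, taking $p=q+1$ gives $\vecu(t)\in\Scsum_q$. Hence $\vecu(t)\in\bigcap_q \Scsum_q=\Scsum$. Continuity of $t\mapsto\vecu(t)$ in $\Scsum_{q}$ on $[0,\tau^\lambda)$ follows because on this interval $\vecu$ coincides with $\vecu^{q+1,\lambda}$, which is a uniform limit of continuous maps in $\Scsum_q$ (Theorem \ref{existence-Burger}).

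\textbf{Step 3 (integral equation).} Fix $q$ with $q-1\geq\frac{d+1}{2}$. We need $\vecu$ to satisfy \eqref{Burger} in $\Scsum_{q-1}$, with the integrand Bochner integrable. On $[0,\tau^\lambda)$ this holds because $\vecu=\vecu^{q,\lambda}$ (indeed also $=\vecu^{q+1,\lambda}$), which solves \eqref{Burger} there. For Bochner integrability we bound the integrand using Theorem \ref{product-Sp} and the boundedness of $\partial_j$ and $\bigtriangleup$ between the spaces. Working with $\vecu^{q+1,\lambda}$, which is continuous into $\Scsum_q$, makes $\frac12\bigtriangleup\vecu+\vecu\cdot\nabla\vecu$ continuous into $\Scsum_{q-1}$ on compact subintervals, hence integrable.

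\textbf{Main obstacle.} The delicate point is making sure the time $\tau$ is strictly positive and independent of $q$, so that a single interval works for every index simultaneously. This is exactly why one takes $\tau^\lambda$ (the limit over $p$) rather than $\tau^{p,\lambda}$. Proposition \ref{local-existence} gives $\tau^\lambda>0$. A secondary concern is that Theorem \ref{existence-Burger} gives the solution on the closed interval $[0,\tau^{p,\lambda}]$, whereas the definition of a local solution uses the half-open interval $[0,\tau)$. The patched $\vecu$ is only claimed on $[0,\tau)$, where every $t$ lies strictly inside some $[0,\tau^{p,\lambda}]$ for all $p$, so this mismatch causes no difficulty.
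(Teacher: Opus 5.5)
Your proposal is correct and follows essentially the same route as the paper. You patch the $\vecu^{p,\lambda}$ over $p$ via Lemma~\ref{consistency} to get a $p$-independent solution on $[0,\tau^\lambda)$, then patch over $\lambda$ (again by Lemma~\ref{consistency}) up to $\tau=\lim_{\lambda\to\infty}\tau^\lambda$. Your Steps 2--3 spell out the continuity, integrability and integral-equation checks that the paper leaves as ``by construction''; for small or borderline indices $q$, simply use a larger $p$ together with the embedding $\Scsum_{p-1}\subset\Scsum_q$.
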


\begin{proof}
Fix $\lambda > 0$. Recall that $\tau^{p, \lambda} \downarrow \tau^\lambda$ as $p \to \infty$. By Lemma \ref{consistency}, $\vecu^{p, \lambda} = \vecu^{q, \lambda}$ on $[0, \tau^{\lambda}]$ for any $p, q > \frac{d+1}2 + 1$. As $\vecu^{p, \lambda}$ on $[0, \tau^{\lambda}]$ does not depend on $p$, we denote this function $\vecu^\lambda$, by dropping the super-script $p$. But, by construction, $\vecu^\lambda$ is $\Scsum_{p-1}$ valued for all $p> \frac{d+1}2 + 1$ and hence is $\Scsum$ valued and $(\vecu^\lambda, \tau^\lambda)$ is an $\Scsum$ valued local solution to \eqref{Burger}.

Again, by Lemma \ref{consistency}, we have $\vecu^{p, \lambda} = \vecu^{p, \eta}$ on $[0, \tau^ \lambda) \cap [0, \tau^ \eta)$ for any $\lambda, \eta > 0$ and for all integers $p > \frac{d+1}2 + 1$. Consequently, we have $\vecu^\lambda = \vecu^\eta$ on $[0, \tau^ \lambda) \cap [0, \tau^ \eta)$ for any $\lambda, \eta > 0$. Since $\tau^\lambda \uparrow \tau$ as $\lambda \to \infty$, define $\vecu:[0, \tau) \to \Scsum$ by $\vecu(t) := \vecu^\lambda(t)$ if $t \in [0, \tau^ \lambda)$. By construction,  $(\vecu, \tau)$ is an $\Scsum$ valued local solution to \eqref{Burger}.
\end{proof}

We now establish the uniqueness of the $\Scsum$ valued local solution.
\begin{theorem}\label{existence-uniqueness-schwartz-initial}
Let $\vecu_0 \in \Scsum$.
The local solution to Burgers' equation \eqref{Burger} obtained earlier (Theorem \ref{defn-by-consistency}) is unique, in the following sense: if $(\hat\vecu, \tau_1)$ and $(\hat\vecv, \tau_2)$ are two local solutions with the same initial value $\vecu_0$, then $\hat\vecu(t) = \hat\vecv(t), \forall t < \tau_1 \wedge \tau_2$.
\end{theorem}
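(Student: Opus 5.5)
The plan is to run the energy-equality argument already used in the consistency Lemma \ref{consistency}, localized by a stopping time so that both solutions stay in a fixed ball where Lemma \ref{nonlinear-mono} applies with a uniform constant.

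First, fix an integer $p$ with $p - 1 > \frac{d+1}{2}$, so that both $(\hat\vecu, \tau_1)$ and $(\hat\vecv, \tau_2)$ are $\Scsum_p$ valued local solutions. Fix any $T_0 < \tau_1 \wedge \tau_2$. Both maps $\hat\vecu, \hat\vecv : [0, T_0] \to \Scsum_p$ are continuous on a compact interval, so
\[R := \sup_{t \in [0, T_0]} \left(\|\hat\vecu(t)\|_p + \|\hat\vecv(t)\|_p\right) < \infty .\]
Thus for every $s \in [0, T_0]$ both $\hat\vecu(s)$ and $\hat\vecv(s)$ lie in $B_p(R)$. No stopping time is actually needed, since $T_0 < \tau_1\wedge\tau_2$ and continuity already give the bound.

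Next, set $\mathbf{w} := \hat\vecu - \hat\vecv$. Subtracting the integral equations \eqref{Burger}, $\mathbf{w}(t) = \int_0^t F(s)\,ds$ in $\Scsum_{p-1}$, where
\[F(s) = \tfrac12 \bigtriangleup \mathbf{w}(s) + \hat\vecu(s)\cdot\nabla\hat\vecu(s) - \hat\vecv(s)\cdot\nabla\hat\vecv(s).\]
This $F$ is Bochner integrable in $\Scsum_{p-1}$, so $\mathbf{w}$ is absolutely continuous in the Hilbert space $\Scsum_{p-1}$. Hence
\[\|\mathbf{w}(t)\|_{p-1}^2 = 2\int_0^t \inpr[p-1]{\mathbf{w}(s)}{F(s)}\,ds .\]
We need $\bigtriangleup\mathbf{w}(s) \in \Scsum_{p-1}$. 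This holds because $\mathbf{w}(s) \in \Scsum_p$ only gives $\bigtriangleup \mathbf{w} \in \Scsum_{p-2}$, but condition (3) of the definition guarantees that the full integrand lies in $\Scsum_{p-1}$, and the product terms lie in $\Scsum_{p-1}$ by Theorem \ref{product-Sp}. The right-hand side is then exactly the left-hand side of Lemma \ref{nonlinear-mono} with $\bm\phi = \hat\vecu(s)$ and $\bm\psi = \hat\vecv(s)$. That lemma gives
\[\|\mathbf{w}(t)\|_{p-1}^2 \le C(R,p,d)\int_0^t \|\mathbf{w}(s)\|_{p-1}^2\,ds, \qquad t \in [0,T_0].\]
Gronwall's inequality then yields $\mathbf{w} \equiv 0$ on $[0,T_0]$. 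Since $T_0 < \tau_1\wedge\tau_2$ was arbitrary, the theorem follows.

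The main obstacle is rigor in the energy identity. $\bigtriangleup \mathbf{w}(s)$ is a priori only in $\Scsum_{p-2}$, while Lemma \ref{nonlinear-mono} pairs $\bm\phi - \bm\psi$ with $\bigtriangleup(\bm\phi-\bm\psi)$ in $\Scsum_{p-1}$. To avoid this, I would exploit that the solutions are $\Scsum$ valued, i.e.\ $\Scsum_{p+1}$ valued as well. Taking the bound $R$ in $\|\cdot\|_{p+1}$ then places everything legitimately in $\Scsum_{p-1}$. Alternatively, one can apply Lemma \ref{nonlinear-mono} with $p+1$ in place of $p$ and work in $\|\cdot\|_p$. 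Either way the identity $\frac{d}{dt}\|\mathbf{w}\|^2 = 2\langle \mathbf{w}, \mathbf{w}'\rangle$ is justified. Since Lemma \ref{nonlinear-mono} is stated for $\bm\phi,\bm\psi\in B_p(R)$ (the constant being uniform via density of $\Scsum$), this finite-regularity bookkeeping is the only delicate point.
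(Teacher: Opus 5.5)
Your proposal is correct and follows the paper's argument: the energy identity for $\hat\vecu-\hat\vecv$ in $\|\cdot\|_{p-1}$, then Lemma~\ref{nonlinear-mono}, then Gronwall. The only change is that you localize by continuity on a compact interval $[0,T_0]\subset[0,\tau_1\wedge\tau_2)$ instead of by the stopping times $\eta^{p,\lambda}$ with $\lambda\to\infty$, which is simpler and gives you exactly the bound $\hat\vecu(s),\hat\vecv(s)\in B_p(R)$ that Lemma~\ref{nonlinear-mono} needs.

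The obstacle you flag is not real. Since $\|\cdot\|_p$ contains every term $x^\alpha\partial^\beta$ with $|\alpha|+|\beta|\le 2p$, one has $\|\partial^2_{ij}f\|_{p-1}\le\|f\|_p$. So $\bigtriangleup$ maps $\Scsum_p$ boundedly into $\Scsum_{p-1}$ (not merely into $\Scsum_{p-2}$), and the energy identity needs no extra regularity.
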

\begin{proof}
Following the estimates obtained regarding the existence argument in Theorem \ref{existence-Burger}, for any two solutions $\hat \vecu$ and $\hat\vecv$, on their common interval of existence, using Lemma \ref{nonlinear-mono}, we have for all $t \leq \eta^{p, \lambda} \wedge \tau_1 \wedge \tau_2$,
\begin{align*}
    \|\hat\vecu(t) - \hat\vecv(t)\|_{p-1}^2 &= \int_0^t \inpr[p-1]{\hat\vecu(s) - \hat\vecv(s)}{\bigtriangleup(\hat\vecu(s) - \hat\vecv(s))}\, ds\\
    &+2\int_0^t \inpr[p-1]{\hat\vecu(s) - \hat\vecv(s)}{\hat\vecu(s). \nabla\hat\vecu(s) - \hat\vecv(s). \nabla\hat\vecv(s)}\, ds\\
    &\leq C_{\lambda, p,\vecu_0} \int_0^t \|\hat\vecu(s) - \hat\vecv(s)\|_{p-1}^2\, ds,
\end{align*}
where
\begin{align*}
\eta_{\alpha}^{p, \lambda} &:= \inf\left\{t > 0 \mid \|\partial^\alpha \hat\vecu(t) - \partial^\alpha\vecu_0\|_{p-K(|\alpha|)} \geq \lambda\, \text{or}\, \|\partial^\alpha \hat\vecv(t) - \partial^\alpha \vecu_0\|_{p-K(|\alpha|)} \geq \lambda\right\} \wedge T,\\
\eta^{p, \lambda} &:= \left( \bigwedge_{\alpha:|\alpha| = 0}^{2p} \eta_{\alpha}^{p, \lambda} \right) \wedge T,
\end{align*}
for any large positive $\lambda$ and any integer $p > \frac{d+1}{2} + 1$. Using Gronwall's inequality, we have $\hat\vecu(t) = \hat\vecv(t), \forall t \leq \eta^{p,\lambda} \wedge \tau_1 \wedge \tau_2$ in $\Scsum_{p-1}$ and hence in $\Scsum$. Letting $\lambda$ go to infinity, we get the required uniqueness.
\end{proof}

\begin{remark}
In Proposition \ref{local-existence}, we have $\tau \in (0, T]$. If $\tau = T$, then the system \eqref{Burger-PDE} can be started with the initial condition $\vecu(T)$ and extended to a larger time interval. Now, if $\tau < T$ at any stage, it is a reasonable question to ask if $\lim_{t \uparrow \tau}\|\vecu(t)\|_p = \infty$ for some $p > \frac{d+1}{2} + 1$. Using the definition of $\tau^{p, \lambda}_n$, for any $\lambda > 0$ and large $p$, there exists $N = N(\lambda, p)$ such that for each $n \geq N$, there exists a multi-index $\alpha$ with $0 \leq |\alpha| \leq 2p$ with 
\[\|\partial^{\alpha_n} \vecu_n(\tau^{\bar p, \lambda}_n) - \partial^{\alpha_n} \vecu_0\|_{p - K(|\alpha|)} = \lambda.\]
We would be done if we could take $\lambda \to \infty$, keeping $p$ fixed. However, this would require the equality of $\tau^\prime := \lim_{p \to \infty} \lim_{\lambda \to \infty} \tau^{p, \lambda}$ with $\tau = \lim_{\lambda \to \infty} \lim_{p \to \infty} \tau^{p, \lambda}$, which we have been unable to establish.
\end{remark}

\section{Continuity with respect to Initial value}

Consider the two following equations.
\begin{align*} 
& \vecu_{1}(t) = \vecu_{0,1} +  \int_0^t \left[\frac{1}{2}\bigtriangleup \vecu_{1}(s) + \vecu_1(s) . \nabla \vecu_{1}(s)\right]\, ds,\\
& \vecu_{2}(t) = \vecu_{0,2} +  \int_0^t \left[\frac{1}{2}\bigtriangleup \vecu_{2}(s) + \vecu_2(s) .\nabla \vecu_{2}(s)\right]\, ds,
\end{align*}
where the initial conditions $\vecu_{0,1}, \vecu_{0,2} \in \Scsum$. Then we have the following estimate.

\begin{lemma}\label{continuity-estimate}
For any positive $\lambda$ and any positive integer 
$p > \frac{d+1}2 + 1$, with
\begin{align*}
\eta_{\alpha}^{p, \lambda} &:= \inf\left\{t > 0 \mid \|\partial^\alpha \vecu_1(t) - \partial^\alpha \vecu_{0,1}\|_{p-K(|\alpha|)} \geq \lambda \text{ or } \|\partial^\alpha \vecu_2(t) - \partial^\alpha \vecu_{0,2}\|_{p-K(|\alpha|)} \geq \lambda\right\} \wedge T,\\
\eta^{p, \lambda} &:= \left( \bigwedge_{\alpha : |\alpha| = 0}^{2p} \eta_{\alpha}^{p, \lambda} \right) \wedge T ,
\end{align*}
we have for $t \leq \eta^{p, \lambda}$
\[\left\| \vecu_{1}(t) - \vecu_{2}(t)\right\|_{p-1}^2 \leq \left\|  \vecu_{0,1} -  \vecu_{0,2}\right\|_{p-1}^2 e^{Ct},\]
where $C = C_{\lambda, p, \vecu_{0,1}, \vecu_{0,2}}$ is a positive constant. Moreover, $C$ depends on $\vecu_{0,1}, \vecu_{0,2}$ only through $\|\vecu_{0,1}\|_p, \|\vecu_{0,2}\|_p$ and is non-decreasing in these parameters.
\end{lemma}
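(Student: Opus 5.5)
Set $\mathbf{w}(t) := \vecu_1(t) - \vecu_2(t)$ for $t \leq \eta^{p,\lambda}$ and estimate $\|\mathbf{w}(t)\|_{p-1}^2$ by an energy identity, the nonlinear Monotonicity inequality (Lemma \ref{nonlinear-mono}) and Gronwall's inequality. This is the argument of Lemma \ref{consistency} and Theorem \ref{existence-uniqueness-schwartz-initial}, except that the initial term no longer vanishes.

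\emph{Step 1: energy identity.} Subtracting the two integral equations gives, in $\Scsum_{p-1}$,
\[\mathbf{w}(t) = (\vecu_{0,1}-\vecu_{0,2}) + \int_0^t \Big[\tfrac12 \bigtriangleup \mathbf{w}(s) + \vecu_1(s).\nabla\vecu_1(s) - \vecu_2(s).\nabla\vecu_2(s)\Big]\,ds .\]
Since $\vecu_1,\vecu_2$ are $\Scsum$ valued, the integrand is continuous into every $\Scsum_q$. The map $t\mapsto \mathbf{w}(t)$ is therefore $C^1$ into $\Scsum_{p-1}$, and differentiating $\|\mathbf{w}(t)\|_{p-1}^2$ gives
\[\|\mathbf{w}(t)\|_{p-1}^2 = \|\vecu_{0,1}-\vecu_{0,2}\|_{p-1}^2 + \int_0^t \Big[\inpr[p-1]{\mathbf{w}}{\bigtriangleup \mathbf{w}} + 2\inpr[p-1]{\mathbf{w}}{\vecu_1.\nabla\vecu_1 - \vecu_2.\nabla\vecu_2}\Big](s)\,ds.\]

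\emph{Step 2: a priori bound from stopping.} For $s \leq \eta^{p,\lambda}$, the definition of $\eta^{p,\lambda}$ gives
\[\|\partial^\alpha \vecu_i(s) - \partial^\alpha \vecu_{0,i}\|_{p-K(|\alpha|)} \leq \lambda\]
for all $|\alpha|\le 2p$ and $i=1,2$. In particular, with $\alpha=0$ (where $K=1$),
\[\|\vecu_i(s)\|_{p-1}\le \|\vecu_{0,i}\|_{p-1}+\lambda.\]
Lemma \ref{nonlinear-mono} is stated for $\bm\phi,\bm\psi\in B_p(R)$, which asks for a bound in $\Scsum_p$. I would assemble this bound from the derivative controls: take $|\alpha|=1$ (where $K=1$) to bound $\|\partial_j\vecu_i(s)\|_{p-1}$, and combine with the $\Scsum_{p-1}$ bound and the equivalence $\|\phi\|_p^2 \asymp \|\phi\|_{p-1}^2 + \sum_j\|\partial_j\phi\|_{p-1}^2 + \sum_j\|x_j\phi\|_{p-1}^2$. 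For the weight terms $x_j\phi$ I would instead go directly to the proof of Lemma \ref{nonlinear-mono}. There, only finitely many $\mathcal L^\infty$ norms of derivatives of $\vecu_i$ enter through \eqref{moneqn2-cal3}, together with $\|\partial\vecu_i\|_{p-1}$ through the Cauchy--Schwarz and product term, and both are controlled by the $\eta$-bounds via the Sobolev embedding \cite[Theorem 4.1]{MR2373102}. Either way one gets
\[R = R(\lambda,p,\|\vecu_{0,1}\|_p,\|\vecu_{0,2}\|_p),\]
non-decreasing in the norms of the data, since each bound has the form $\|\partial^\alpha\vecu_{0,i}\|_{p-K(|\alpha|)}+\lambda \le C_p\|\vecu_{0,i}\|_p+\lambda$. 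This holds because $K(|\alpha|)\ge 1$ for $|\alpha|\le 2$ and the relevant $\partial^\alpha$ map $\Sc_p$ into $\Sc_{p-K}$ boundedly.

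\emph{Step 3: conclusion.} Lemma \ref{nonlinear-mono} then bounds the integrand by $C(R,p,d)\|\mathbf{w}(s)\|_{p-1}^2$. Gronwall's inequality gives
\[\|\mathbf{w}(t)\|_{p-1}^2 \le \|\vecu_{0,1}-\vecu_{0,2}\|_{p-1}^2\, e^{Ct}, \qquad C=C(R,p,d).\]
Because $C(R,p,d)$ can be taken non-decreasing in $R$ (replace it by its supremum over smaller radii), $C$ depends on the data only through $\|\vecu_{0,1}\|_p$ and $\|\vecu_{0,2}\|_p$, monotonically.

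The main obstacle is Step 2: showing that the stopped quantities control everything the constant in Lemma \ref{nonlinear-mono} depends on, uniformly for $s\le\eta^{p,\lambda}$, and that the dependence on the initial data is only through their $\Scsum_p$ norms. If the $\Scsum_p$-ball formulation is awkward, I would re-run the proof of Lemma \ref{nonlinear-mono} and track explicitly which norms of $\vecu_1,\vecu_2$ appear.
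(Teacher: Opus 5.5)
Your outline (energy identity for $\mathbf w=\vecu_1-\vecu_2$ in $\Scsum_{p-1}$, then Lemma \ref{nonlinear-mono}, then Gronwall) is exactly the paper's proof. The paper simply invokes Lemma \ref{nonlinear-mono} for $t\le\eta^{p,\lambda}$ and never explains why the constant depends only on $\lambda,p,\|\vecu_{0,i}\|_p$. You correctly flag this as the crux, but neither route in your Step 2 delivers it.

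\emph{Route (a).} Your displayed norm equivalence is false. Its right-hand side contains only terms $x^\alpha\partial^\beta\phi$ with $|\alpha|+|\beta|\le 2p-1$, so it cannot dominate $\|\phi\|_p$; test on Hermite functions $h_n$ with $|n|\to\infty$. More fundamentally, the $\eta$-bounds never reach the top-order weights. The $\alpha=0$ bound stops at weights of order $2p-2$, and every other bound carries a derivative. So they do not place $\vecu_i(s)$ in any $B_p(R)$ with $R=R(\lambda,p,\|\vecu_{0,i}\|_p)$, and ``either way one gets $R$'' is unjustified.

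\emph{Route (b).} Write $\phi_k$ for a component of $\vecu_i(s)$. The constant behind \eqref{moneqn2-cal3} (cf.\ Theorem \ref{L2-alt-mono} at level $p-1$) involves $\|\partial^\gamma\phi_k\|_{\mathcal L^\infty}$ for all $|\gamma|\le 2p-2$. For $|\gamma|\in\{2p-3,2p-2\}$ one has $K(|\gamma|)=p-1$, so the $\eta$-bound controls $\partial^\gamma\vecu_i(s)$ only in $\Sc_1$. But $\Sc_1\subset\mathcal L^\infty$ requires $1>\frac d4$. Your Sobolev-embedding step therefore fails for $d\ge4$. The same is true of bounding $\|\partial^\gamma\vecu_{0,i}\|_{\mathcal L^\infty}$ by $\|\vecu_{0,i}\|_p$.

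To close the gap, do not put every Leibniz factor into $\mathcal L^\infty$. In $\inpr[p-1]{w_j}{\phi_k\partial_k w_j}$, expand $x^\alpha\partial^\beta(\phi_k\partial_k w_j)$ for $|\alpha|+|\beta|\le 2p-2$.
\begin{itemize}
\item The top term, after an integration by parts, needs only $\|\phi_k\|_{\mathcal L^\infty}$ and $\|\nabla\phi_k\|_{\mathcal L^\infty}$. These are controlled by the $\eta$-bounds with $|\alpha|\le 1$, since $\Sc_{p-1}\subset\mathcal L^\infty$.
\item For a term $\partial^\gamma\phi_k\cdot x^\alpha\partial^{\beta-\gamma+e_k}w_j$ with $m=|\gamma|\ge1$ and $p-K(m)>\frac d4$, bound $\partial^\gamma\phi_k$ in $\mathcal L^\infty$ by $C(\|\vecu_{0,i}\|_p+\lambda)$.
\item Otherwise, bound $\partial^\gamma\phi_k$ in $\mathcal L^2$ by $\|\partial^\gamma\phi_k\|_{p-K(m)}\le\|\vecu_{0,i}\|_p+\lambda$, and put the $\mathbf w$-factor in $\mathcal L^\infty$. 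That factor carries at most $2p-1-m$ weights and derivatives, so it lies in $\Sc_{(m-1)/2}$ with norm $\le C\|\mathbf w\|_{p-1}$. This embeds into $\mathcal L^\infty$ as soon as $m-1>\frac d2$.
\end{itemize}
Because $p>\frac{d+1}{2}+1$, every $m$ falls into at least one of the last two cases: if $m-1\le\frac d2$ then $K(m)\le\frac d4+1$, so $p-K(m)>\frac d4$. This gives the needed bound $C\|\mathbf w\|_{p-1}^2$, with $C$ depending only on $\lambda,p,d$ and, monotonically, on $\|\vecu_{0,1}\|_p,\|\vecu_{0,2}\|_p$.

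Your treatment of the cross term $\inpr[p-1]{\mathbf w}{\mathbf w.\nabla\vecu_2}$ via Theorem \ref{product-Sp} and the $|\alpha|=1$ bound is already fine. With this replacement for Step 2, your Steps 1 and 3 go through as written.
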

\begin{proof}
By It\^o's formula, for all $t \leq \eta^{p, \lambda}$, 
\begin{align*}
\left\| \vecu_{1}(t) - \vecu_{2}(t)\right\|_{p-1}^2 & = \left\|  \vecu_{0,1} -  \vecu_{0,2}\right\|_{p-1}^2 \\
& \quad + \int_0^t \inpr[p-1]{\vecu_{1}(s) - \vecu_{2}(s)}{\bigtriangleup\left( \vecu_{1}(s) - \vecu_{2}(s)\right)}\, ds \\
& \quad + \int_0^t 2\inpr[p-1]{\vecu_{1}(s) - \vecu_{2}(s)}{\vecu_1(s). \nabla \vecu_{1}(s) - \vecu_2(s) .\nabla \vecu_{2}(s)}\, ds \\
&\leq \left\|  \vecu_{0,1} -  \vecu_{0,2}\right\|_{p-1}^2+ C_{\lambda, p, \vecu_{0,1}, \vecu_{0,2}} \int_0^t \| \vecu_1(s) - \vecu_2(s)\|_{p-1}^2\, ds\\
& \leq \left\|  \vecu_{0,1} -  \vecu_{0,2}\right\|_{p-1}^2 e^{C_{\lambda, p, \vecu_{0,1}, \vecu_{0,2}}t}.
\end{align*}
In the above argument we have used Lemma \ref{nonlinear-mono} and Gronwall's inequality. This completes the proof.
\end{proof}
\begin{theorem}
Let the initial value $\vecu_0\in \Scsum_p$ for some $p > \frac{d+1}{2} + 1$. Then there exists a unique $\Scsum_{p-1}$ valued local solution $(\vecu, \tau)$ to \eqref{Burger}.
\end{theorem}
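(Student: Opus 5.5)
We argue by density, using the stability estimate of Lemma \ref{continuity-estimate} to transfer the results for $\Scsum$-valued data (Theorems \ref{defn-by-consistency} and \ref{existence-uniqueness-schwartz-initial}) to $\vecu_0 \in \Scsum_p$. Choose $\vecu_{0,m} \in \Scsum$ with $\|\vecu_{0,m} - \vecu_0\|_p \to 0$; in particular $\sup_m \|\vecu_{0,m}\|_p \leq R$ for some $R$. Let $(\vecu^{(m)}, \tau_m)$ be the $\Scsum$-valued local solutions from Theorem \ref{defn-by-consistency}.

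\textbf{Step 1 (common time interval).} We need a time $\tau_* > 0$, independent of $m$, on which all the $\vecu^{(m)}$ exist. We also need the stopped quantities $\|\partial^\alpha \vecu^{(m)}(t) - \partial^\alpha \vecu_{0,m}\|_{p-K(|\alpha|)}$ to stay below a fixed $\lambda$ there, so that the stopping times $\eta^{p,\lambda}$ of Lemma \ref{continuity-estimate} are bounded below uniformly in $m$. We repeat the argument of Proposition \ref{local-existence}. While stopped, the integrand in \eqref{main-estimate} is bounded by a constant $C_\lambda$ that depends on the data only through $\|\vecu_{0,m}\|_p$, via Theorem \ref{product-Sp} and the bounds $\|\partial^\alpha \vecu_{0,m}\|_{p-K(|\alpha|)} \leq C\|\vecu_{0,m}\|_p$. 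Hence the first exit time is at least $\lambda/C_\lambda(R) =: \tau_*$ for every $m$.

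I expect this step to be the main obstacle. The indices $p-K(|\alpha|)$ for $|\alpha|$ up to $2p$ demand more regularity of $\vecu_{0,m}$ than $\Scsum_p$ gives. If the uniformity fails for high $|\alpha|$, I will first try keeping only the finitely many $\alpha$ actually needed in Lemma \ref{nonlinear-mono}, i.e.\ only $\|\vecu^{(m)}(t)\|_p \leq R + \lambda$. For this I will use a short-time a priori bound on $\|\vecu^{(m)}(t)\|_p$ in terms of $\|\vecu_{0,m}\|_p$, obtained by an energy estimate in $\|\cdot\|_p$ as in Lemma \ref{nonlinear-mono}, followed by a comparison ODE $y' \leq C(1+y)^{3/2}$.

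\textbf{Step 2 (Cauchy property and limit).} On $[0,\tau_*]$, Lemma \ref{continuity-estimate} applied to $\vecu^{(m)}$ and $\vecu^{(k)}$ gives
\[
\sup_{t \leq \tau_*} \|\vecu^{(m)}(t) - \vecu^{(k)}(t)\|_{p-1}^2 \leq \|\vecu_{0,m} - \vecu_{0,k}\|_{p-1}^2 e^{C\tau_*},
\]
where $C$ is uniform in $m,k$ because it is non-decreasing in $\|\vecu_{0,m}\|_p$ and $\|\vecu_{0,k}\|_p$. So $\vecu^{(m)}$ converges uniformly on $[0,\tau_*]$ in $\Scsum_{p-1}$ to a continuous limit $\vecu$. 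Each $\vecu^{(m)}$ is bounded in $\Scsum_p$ uniformly in $m$ and $t$ by Step 1. Then $\bigtriangleup \vecu^{(m)}(s) \to \bigtriangleup \vecu(s)$ in $\Scsum_{p-3}$, and $\vecu^{(m)}(s)\cdot\nabla\vecu^{(m)}(s)$ converges in $\Scsum_{p-2}$ by Theorem \ref{product-Sp} at index $p-2$; this needs $p-2 > \frac{d+1}{2}$, or else interpolation with the uniform $\Scsum_p$ bound. Both terms are uniformly bounded in $\Scsum_{p-1}$, so dominated convergence lets us pass to the limit in \eqref{Burger} in a weaker space. A weak-compactness argument, using the uniform $\Scsum_p$ bound, identifies the Bochner integral in $\Scsum_{p-1}$. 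Hence $(\vecu, \tau_*)$ is an $\Scsum_{p-1}$-valued local solution, and defining $\tau$ as the supremum of such times gives the local solution.

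\textbf{Step 3 (uniqueness).} Repeat the proof of Theorem \ref{existence-uniqueness-schwartz-initial}. For two solutions, write the energy identity for $\|\hat\vecu(t) - \hat\vecv(t)\|_{p-1}^2$, apply Lemma \ref{nonlinear-mono} on the interval where both stay in a ball, use Gronwall, and let the localisation parameter $\lambda \to \infty$. The justification of the energy identity for non-Schwartz solutions uses the pairing between $\Scsum_{p-1}$ and the drift; if necessary it is done at level $p-2$ and then lifted.
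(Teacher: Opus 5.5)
Your overall architecture matches the paper's. You approximate $\vecu_0$ in $\Scsum_p$ by Schwartz data and get the Cauchy property in $\Scsum_{p-1}$ from Lemma~\ref{continuity-estimate} on a common localised interval. You prove uniqueness with Lemma~\ref{nonlinear-mono} and Gronwall. The gap is Step~1, which is where the common interval must come from, and neither of your routes produces it.

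\textbf{The primary route fails.} The constant $C_\lambda(R)$ exists only for $\alpha=0$. For $|\alpha|\geq 1$ the integrand contains $\frac12\bigtriangleup\partial^\alpha\vecu^{(m)}$ measured in $\Scsum_{p-K(|\alpha|)}$. This involves $2p-2K(|\alpha|)+|\alpha|+2\geq 2p+1$ derivatives and weights. The stopped quantities and $\|\vecu_{0,m}\|_p$ involve at most $2p$. So any Lipschitz-in-$t$ bound near $t=0$ needs norms of $\vecu_{0,m}$ beyond $\|\cdot\|_p$, and these are unbounded in $m$ for a general $\vecu_0\in\Scsum_p$. The paper proves $\sigma^{p,\lambda}>0$ by the contradiction argument of Proposition~\ref{local-existence}, which rests on this same uniform ratio bound, so you cannot borrow it from there.

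\textbf{The fallback is the right localisation but does not finish.} Localising only on $\|\vecu^{(m)}(t)\|_p$ suffices, since that is all Lemmas~\ref{nonlinear-mono} and~\ref{continuity-estimate} use. The level-$p$ bound does not follow from Lemma~\ref{nonlinear-mono}, which loses a level, but you need no commutator estimate. Keep the dissipative term from Theorem~\ref{L2-alt-mono} with $\sigma=I$, $b=0$, namely $\inpr[p]{\vecu}{\bigtriangleup\vecu}\leq C\|\vecu\|_p^2-\|\nabla\vecu\|_p^2$, where $\|\nabla\vecu\|_p^2:=\sum_{i,j}\|\partial_j u_i\|_p^2$. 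Then absorb $2\|\vecu\|_p\|\vecu\cdot\nabla\vecu\|_p\leq 2C\|\vecu\|_p^2\|\nabla\vecu\|_p$ via Theorem~\ref{product-Sp}.

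The decisive problem is that a bound valid on $[0,\tau_m\wedge\tau_*)$ does not give $\tau_m\geq\tau_*$. Nothing in the paper bounds the lifespan of the Schwartz solution of Theorem~\ref{defn-by-consistency} from below in terms of $\|\vecu_{0,m}\|_p$. Its construction passes through $\tau^\lambda=\lim_{q\to\infty}\tau^{q,\lambda}$, i.e.\ through all Schwartz seminorms of $\vecu_{0,m}$, and these are unbounded in $m$. You would need a blow-up alternative in $\|\cdot\|_p$. The Remark after Theorem~\ref{existence-uniqueness-schwartz-initial} says exactly this could not be established, and the paper's proof tacitly treats the approximating solutions as defined for all $t\geq 0$. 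Without a common interval, Steps~2 and~3 have nothing to act on.

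\textbf{A fix within the paper's tools.} Put the a priori bound on the iterates $\vecu^{(m)}_n$ of \eqref{iteration-linear} with data $\vecu_{0,m}$ instead. These exist for all $t\geq 0$ because each equation is linear. The same dissipative estimate gives
$\frac{d}{dt}\|\vecu^{(m)}_{n+1}\|_p^2\leq (C+C'\|\vecu^{(m)}_n\|_p^2)\,\|\vecu^{(m)}_{n+1}\|_p^2$.
Induction on $n$ then yields $\sup_{n,m}\sup_{t\leq\tau_*}\|\vecu^{(m)}_n(t)\|_p\leq 2R$ with $\tau_*=\tau_*(R)$. Lemma~\ref{Mon-inq2}, used as in Theorem~\ref{existence-Burger}, gives for each $m$ a solution on $[0,\tau_*]$ bounded by $2R$ in $\Scsum_p$. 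On that interval your Steps~2 and~3 go through as written.
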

\begin{proof}
Let $\{\vecu_{0,n}\}_{n\in\N}$ be a sequence such that $\vecu_{0,n}\in \Scsum$, for each $n\in\N$ and $\vecu_{0,n}\to \vecu_0$ in $\Scsum_p$ i.e. $\|\vecu_{0,n}-\vecu_0\|_p\to0$. Where $\vecu_{0,n}$ is the initial condition of the following non-linear PDE 
\begin{equation}\label{u-n-initialconv1}
\vecu_{n}(t) = \vecu_{0,n} +  \int_0^t \left[\frac{1}{2}\bigtriangleup \vecu_{n}(s) + \vecu_n(s) .\nabla \vecu_{n}(s)\right]\, ds, \, t \geq 0,
\end{equation}
and $\vecu_0\in \Scsum_p$ is the initial condition of \eqref{Burger}. Define
\begin{align}\label{stoptime-u-n-initial}
\begin{split}
\sigma_{n, \alpha}^{p, \lambda} &:= \inf\left\{t > 0 \mid \|\partial^\alpha \vecu_n(t) - \partial^\alpha \vecu_{0,n}\|_{p-K(|\alpha|)} \geq \lambda \right\} \wedge T,\\
\sigma_n^{p, \lambda} &:= \left( \bigwedge_{\alpha: |\alpha| = 0}^{2p} \sigma_{n, \alpha}^{p, \lambda} \right) \wedge T \wedge \sigma_{n-1}^{p, \lambda}.
\end{split}
\end{align}
For integers $m \leq n$ and for $t\leq \sigma_n^{p, \lambda}$, arguing similar to Theorem \ref{existence-Burger}, using Lemma \ref{continuity-estimate} we have
\[\left\| \vecu_n(t) - \vecu_{m}(t)\right\|_{p-1}^2 \leq \left\| \vecu_{0,n} - \vecu_{0,m}\right\|_{p-1}^2 e^{C(\lambda,p,\vecu_{0, n}, \vecu_{0, m})t} \leq \left\| \vecu_{0,n} - \vecu_{0,m}\right\|_{p-1}^2 e^{C(\lambda,p,\vecu_{0})t}.\]
In the last inequality we have used the facts that $\|\vecu_{0,k} - \vecu_0\|_p\to0$ as $k \to \infty$ and $e^{C(\lambda,p,\vecu_{0, n}, \vecu_{0, m})}$ depends on $\vecu_{0, n}, \vecu_{0, m}$ only through their $p$-norm. 

Now observe that,
 \[\vecu_{n+1}(t)=\vecu_0 + \sum_{k=0}^n\left(\vecu_{k+1}(t)-\vecu_k(t) \right).\]
Since $\vecu_{0,n}\to \vecu_0$ in $\Scsum_p$ as $n \to \infty$, arguing as in the proof of Theorem \ref{existence-Burger}, we have the convergence of $\{\vecu_n(t)\}_n$ to a limit in $\Scsum_{p-1}$, say $\vecu^{p, \lambda}(t)$, uniformly in $t\leq \sigma^{p, \lambda}:= \lim_{n\to\infty}\sigma_n^{p, \lambda}$.

By construction $\sigma_n^{p, \lambda}\downarrow\sigma^{p, \lambda}$. Now we claim that $\sigma^{p, \lambda}>0$. The arguments are similar to Proposition \ref{local-existence} and we only provide a sketch of the proof.

If possible, let $\sigma^{p, \lambda} = 0$. Then, there exists a subsequence $\{n_k\}_k$ such that $\sigma_{n_k}^{p, \lambda} < \sigma_{n_k-1}^{p, \lambda}$, $k\geq2$ and a multi-index $\alpha$ with $0 \leq |\alpha| \leq 2p$ such that
\[\|\partial^\alpha \vecu_{n_k}(\sigma_{n_k}^{p, \lambda}) - \partial^\alpha \vecu_{0, n_k}\|_{p-K(|\alpha|)} = \lambda.\]
As done in Proposition \ref{local-existence}, we can arrive at a bounded sequence $\left\{\frac{\|\partial^\alpha \vecu_{n_k}(\sigma_{n_k}^{p, \lambda}) - \partial^\alpha \vecu_{0, n_k}\|_{p-K(|\alpha|)}}{\sigma_{n_k}^{p, \lambda}} \right\}_k$, leading to a contradiction.

As done in Lemma \ref{consistency}, we can show that any two solutions $\vecu^{p, \lambda}(t)$ and $\vecu^{q, \lambda}(t)$ existing upto $\sigma^{p, \lambda}$ and $\sigma^{q, \lambda}$ are equal on the common time-interval $[0, \sigma^{p, \lambda} \wedge \sigma^{q, \lambda}]$ of existence. Due to this consistency, we can repeat the construction undertaken in Theorem \ref{defn-by-consistency} and finally construct the unique (as in Theorem \ref{existence-uniqueness-schwartz-initial}) local solution to \eqref{Burger} in $\Scsum_{p-1}$ with initial condition $\vecu_0\in\Scsum_p$. 
\end{proof}

\bibliography{ref}

\begin{thebibliography}{10}

\bibitem{MR2318582}
J\'er\'emie Bec and Konstantin Khanin.
\newblock Burgers turbulence.
\newblock {\em Phys. Rep.}, 447(1-2):1--66, 2007.

\bibitem{sp-algebra}
Suprio Bhar and Rajeev Bhaskaran.
\newblock Products and {C}onvolutions in the {H}ermite-{S}obolev spaces.
\newblock {\em arXiv}, 2026 (http://arxiv.org/abs/2607.29061).

\bibitem{alt-mono}
Suprio Bhar, Rajeev Bhaskaran, and Arvind~Kumar Nath.
\newblock Existence and uniqueness of stochastic {PDE}s associated with the
  forward equations: an approach using alternate norms.
\newblock {\em Infin. Dimens. Anal. Quantum Probab. Relat. Top.}, 29(1):Paper
  No. 2550007, 21, 2026.

\bibitem{MR3331916}
Suprio Bhar and B.~Rajeev.
\newblock Differential operators on {H}ermite {S}obolev spaces.
\newblock {\em Proc. Indian Acad. Sci. Math. Sci.}, 125(1):113--125, 2015.

\bibitem{MR1146}
J.~M. Burgers.
\newblock Mathematical examples illustrating relations occurring in the theory
  of turbulent fluid motion.
\newblock {\em Verh. Nederl. Akad. Wetensch. Afd. Natuurk. Sect. 1}, 17(2):53,
  1939.

\bibitem{MR42889}
Julian~D. Cole.
\newblock On a quasi-linear parabolic equation occurring in aerodynamics.
\newblock {\em Quart. Appl. Math.}, 9:225--236, 1951.

\bibitem{MR2597943}
Lawrence~C. Evans.
\newblock {\em Partial differential equations}, volume~19 of {\em Graduate
  Studies in Mathematics}.
\newblock American Mathematical Society, Providence, RI, second edition, 2010.

\bibitem{MR1681462}
Gerald~B. Folland.
\newblock {\em Real analysis}.
\newblock Pure and Applied Mathematics (New York). John Wiley \& Sons, Inc.,
  New York, second edition, 1999.
\newblock Modern techniques and their applications, A Wiley-Interscience
  Publication.

\bibitem{MR2590157}
L.~Gawarecki, V.~Mandrekar, and B.~Rajeev.
\newblock The monotonicity inequality for linear stochastic partial
  differential equations.
\newblock {\em Infin. Dimens. Anal. Quantum Probab. Relat. Top.},
  12(4):575--591, 2009.

\bibitem{MR47234}
Eberhard Hopf.
\newblock The partial differential equation {$u_t+uu_x=\mu u_{xx}$}.
\newblock {\em Comm. Pure Appl. Math.}, 3:201--230, 1950.

\bibitem{MR771478}
Kiyosi It\^{o}.
\newblock {\em Foundations of stochastic differential equations in
  infinite-dimensional spaces}, volume~47 of {\em CBMS-NSF Regional Conference
  Series in Applied Mathematics}.
\newblock Society for Industrial and Applied Mathematics (SIAM), Philadelphia,
  PA, 1984.

\bibitem{MR1465436}
Gopinath Kallianpur and Jie Xiong.
\newblock {\em Stochastic differential equations in infinite-dimensional
  spaces}, volume~26 of {\em Institute of Mathematical Statistics Lecture
  Notes---Monograph Series}.
\newblock Institute of Mathematical Statistics, Hayward, CA, 1995.
\newblock Expanded version of the lectures delivered as part of the 1993
  Barrett Lectures at the University of Tennessee, Knoxville, TN, March 25--27,
  1993, With a foreword by Balram S. Rajput and Jan Rosinski.

\bibitem{MR570795}
N.~V. Krylov and B.~L. Rozovski\u{\i}.
\newblock Stochastic evolution equations.
\newblock In {\em Current problems in mathematics, {V}ol. 14 ({R}ussian)},
  pages 71--147, 256. Akad. Nauk SSSR, Vsesoyuz. Inst. Nauchn. i Tekhn.
  Informatsii, Moscow, 1979.

\bibitem{MR1837298}
B.~Rajeev.
\newblock From {T}anaka's formula to {I}to's formula: distributions, tensor
  products and local times.
\newblock In {\em S\'{e}minaire de {P}robabilit\'{e}s, {XXXV}}, volume 1755 of
  {\em Lecture Notes in Math.}, pages 371--389. Springer, Berlin, 2001.

\bibitem{brajeev-arxiv}
B.~Rajeev.
\newblock Translation invariant diffusions and stochastic partial differential
  equations in $\mathcal{S}^{\prime}$.
\newblock {\em arXiv:1901.00277v2 [math.PR]}, pages 1--56, 2019.

\bibitem{MR1999259}
B.~Rajeev and S.~Thangavelu.
\newblock Probabilistic representations of solutions to the heat equation.
\newblock {\em Proc. Indian Acad. Sci. Math. Sci.}, 113(3):321--332, 2003.

\bibitem{MR2373102}
B.~Rajeev and S.~Thangavelu.
\newblock Probabilistic representations of solutions of the forward equations.
\newblock {\em Potential Anal.}, 28(2):139--162, 2008.

\bibitem{MR4198716}
Denis Serre.
\newblock Source-solutions for the multi-dimensional {B}urgers equation.
\newblock {\em Arch. Ration. Mech. Anal.}, 239(1):95--116, 2021.

\end{thebibliography}
\bibliographystyle{plain}

\end{document}